\newcommand{\Mod}[1]{\ (\textup{mod}\ #1)}
\theoremstyle{plain} 
\newtheorem{theorem}{\indent\sc Theorem}[section]
\newtheorem{lemma}[theorem]{\indent\sc Lemma}
\newtheorem{proposition}[theorem]{\indent\sc Proposition}
\theoremstyle{definition} 
\newtheorem{remark}[theorem]{\indent\sc Remark}
\newtheorem{example}[theorem]{\indent\sc Example}
\def\address#1#2{\begingroup
\noindent\parbox[t]{7.8cm}{%
\small{\scshape\ignorespaces#1}\par\vskip1ex
\noindent\small{\itshape E-mail address}%
\/: #2\par\vskip4ex}\hfill%
\endgroup}%
\title{Primitive and totally primitive Fricke families with applications} 
\author{
\textsc{Ho Yun Jung, Ja Kyung Koo and Dong Hwa Shin} 
}
\date{} 
\begin{document}

\allowdisplaybreaks

\maketitle

\footnote{ 
2010 \textit{Mathematics Subject Classification}. Primary 11F03, Secondary 11G16.}
\footnote{ 
\textit{Key words and phrases}. Fricke families, modular functions, modular units.}
\footnote{
\thanks{
The first named author was supported by the National Research Foundation
of Korea (NRF) grant funded by the Korea government (MSIP) (2016R1A5A1008055).
The third named (corresponding) author was supported by Hankuk University of Foreign Studies Research Fund of 2016.} }

\begin{abstract}
We introduce the primitivity of Fricke families, and give some examples.
As its application, we first construct
generators of the function field of
the modular curve of level $N$
in terms of Fricke functions and Siegel functions, respectively.
Furthermore, we use the special values of a certain function in a totally primitive Fricke family of level $N$ in order to generate ray class fields of imaginary quadratic fields.
\end{abstract}

\section {Introduction}

For a positive integer $N$, let
$\Gamma(N)=\left\{\gamma\in\mathrm{SL}_2(\mathbb{Z})~|~\gamma\equiv
I_2\Mod{N}\right\}$
be the principal congruence subgroup of $\mathrm{SL}_2(\mathbb{Z})$ of level $N$. This group
acts on the complex upper half-plane $\mathbb{H}=\{\tau\in\mathbb{C}~|~
\mathrm{Im}(\tau)>0\}$ and
$\mathbb{H}^*=\mathbb{H}\cup\mathbb{Q}\cup
\{\mathrm{i}\infty\}$
as fractional linear transformations. One can then give
the orbit space $X(N)=\Gamma(N)\backslash\mathbb{H}^*$ the structure of
a compact Riemann surface, called
the \textit{modular curve} of level $N$ (\cite[$\S$1.5]{Shimura}).
Let $\mathbb{C}(X(N))$ be the field of meromorphic functions
on $X(N)$ which is a Galois extension of $\mathbb{C}(X(1))=\mathbb{C}(j(\tau))$ with
\begin{equation*}
\mathrm{Gal}\left(\mathbb{C}(X(N))/\mathbb{C}(X(1))\right)\simeq
\mathrm{SL}_2(\mathbb{Z})/\pm\Gamma(N)\simeq\mathrm{SL}_2(\mathbb{Z}/N\mathbb{Z})/\{\pm I_2\},
\end{equation*}
where $j(\tau)$ is the elliptic modular function (\cite[Theorem 2 in Chapter 6]{Lang}).
Furthermore, we denote by $\mathcal{F}_N$ the subfield of $\mathbb{C}(X(N))$ consisting of
functions whose Fourier coefficients lie in the $N$th cyclotomic field
$\mathbb{Q}(\zeta_N)$, where $\zeta_N=e^{2\pi\mathrm{i}/N}$. Then, $\mathcal{F}_N$ is also a Galois extension of $\mathcal{F}_1=\mathbb{Q}(j(\tau))$ whose Galois group is isomorphic to
$\mathrm{GL}_2(\mathbb{Z}/N\mathbb{Z})
/\{\pm I_2\}$
(see $\S$\ref{sect2}).
\par
For $N\geq2$, let
\begin{equation*}
\mathcal{V}_N=\{\mathbf{v}\in\mathbb{Q}^2~|~
\textrm{$N$ is the least positive integer so that}~N\mathbf{v}\in\mathbb{Z}^2\}.
\end{equation*}
We call a family $\{h_\mathbf{v}(\tau)\}_{\mathbf{v}\in\mathcal{V}_N}$
of functions in $\mathcal{F}_N$ a
\textit{Fricke family} of level $N$ if it satisfies the following three conditions:
\begin{itemize}
\item[(F1)] Every $h_\mathbf{v}(\tau)$ is holomorphic on $\mathbb{H}$.
\item[(F2)] $h_\mathbf{u}(\tau)=h_\mathbf{v}(\tau)$ if $\mathbf{u}\equiv\pm\mathbf{v}
\Mod{\mathbb{Z}^2}$.
\item[(F3)] $h_\mathbf{v}(\tau)^\alpha=h_{\alpha^T\mathbf{v}}(\tau)$
for $\alpha\in\mathrm{GL}_2(\mathbb{Z}/N\mathbb{Z})/\{\pm I_2\}$,
where $\alpha^T$ stands for the transpose of $\alpha$.
\end{itemize}
As for a Fricke family, Kubert and Lang first gave its definition without the condition (F1) (\cite[pp. 32--33]{K-L}).
Recently, Eum and Shin (\cite{E-K-S})
classified all Fricke families
of level $N$ when $N\equiv0\Mod{4}$. See also \cite{K-Y2}.
\par
We say that a Fricke family $\{h_\mathbf{v}(\tau)\}_{\mathbf{v}\in\mathcal{V}_N}$ of level $N$ is \textit{primitive} if the condition (F2) is strengthened in such a way that
\begin{equation*}
h_\mathbf{u}(\tau)=h_\mathbf{v}(\tau)~
\Longleftrightarrow~
\mathbf{u}\equiv\pm\mathbf{v}\Mod{\mathbb{Z}^2}.
\end{equation*}
Moreover, we say that
$\{h_\mathbf{v}(\tau)\}_{\mathbf{v}\in\mathcal{V}_N}$
is \textit{totally primitive}
if $\{h_\mathbf{v}(\tau)^n\}_{\mathbf{v}\in\mathcal{V}_N}$
is primitive for every positive integer $n$.
In this paper, we shall present several examples of Fricke families
which are primitive or totally primitive (Examples \ref{eg1}, \ref{eg2} and \ref{eg3}).
\par
As is well known, we have
\begin{equation*}
\mathbb{C}(X(N))=\mathbb{C}\left(j(\tau),\,f_{\left[\begin{smallmatrix}1/N\\0\end{smallmatrix}\right]}(\tau),\,
f_{\left[\begin{smallmatrix}0\\1/N\end{smallmatrix}\right]}(\tau)\right),
\end{equation*}
where $f_\mathbf{v}(\tau)$ ($\mathbf{v}\in\mathcal{V}_N)$ are
the classical Fricke functions (see $\S$2 and \cite[Proposition 7.5.1]{D-S}).
Since the modular curve $X(N)$ is an algebraic curve, its function field
$\mathbb{C}(X(N))$ can be generated by two functions (\cite[Theorem 1.9 and Proposition 1.17 in Chapter VI]{Miranda}). As an application of primitive Fricke families,
we shall first construct a primitive generator of $\mathbb{C}(X(N))$ over the field $\mathbb{C}(X(1))
=\mathbb{C}(j(\tau))$ in terms of Fricke functions $f_{\left[\begin{smallmatrix}1/N\\0\end{smallmatrix}\right]}(\tau)$
and $f_{\left[\begin{smallmatrix}0\\1/N\end{smallmatrix}\right]}(\tau)$
(Theorem \ref{usingFricke}) which belong to a primitive Fricke family.
We shall further present a primitive generator of $\mathbb{C}(X(N))$ over
$\mathbb{C}(X(1))$ by making use of only Siegel functions
as members of a totally primitive Fricke family (Theorem \ref{usingSiegel}
and Remark \ref{singularmodel}).
\par
Let $K$ be an imaginary quadratic field of discriminant $d_K$, and let
$\mathcal{O}_K$ be its ring of integers.
If we set
\begin{equation*}
\tau_K=(d_K+\sqrt{d_K})/2,
\end{equation*}
then we see that $\tau_K\in\mathbb{H}$ and $\mathcal{O}_K=\mathbb{Z}\tau_K+\mathbb{Z}$ (\cite[$\S$5.B]{Cox}).
By $H_K$ we mean the Hilbert class field of $K$, and
by $K_{(N)}$ the ray class field modulo $N\mathcal{O}_K$.
Let $\{h_\mathbf{v}(\tau)\}_{\mathbf{v}\in\mathcal{V}_N}$ be a totally primitive Fricke family of level $N$.
For all but finitely many $K$, we shall show that
if the special value $h_{\left[\begin{smallmatrix}0\\1/N\end{smallmatrix}\right]}(\tau_K)$
is nonzero, then
$h_{\left[\begin{smallmatrix}0\\1/N\end{smallmatrix}\right]}(\tau_K)^n$
generates $K_{(N)}$ over $H_K$ for any nonzero integer $n$ (Theorem \ref{invariant} and Remark
\ref{invariantremark}).
\par
Based on this work, Koo et al. established the concept of
a (totally) primitive Siegel family consisting of meromorphic Siegel modular functions
of higher genus $g$ ($\geq2$) (\cite[Definition 3.1]{K-S-Y}).
They further constructed explicit generators of the field of
Siegel modular functions of level $N$ ($\neq 2,\,2^g-1,\,2(2^g-1)$) over the field of Siegel modular functions of level $1$ (\cite[Proposition 3.3 and Theorem 6.2]{K-S-Y}).
To this end, they reduced each theta constant
of genus $g$ to a product of Siegel functions of one-variable, and
then made use of the idea of Example \ref{eg1}.
We also notice that there is a recent attempt (\cite{K-R-S-Y})
to get a higher genus version of Theorem \ref{invariant} for CM-fields.

\section {Meromorphic modular functions}\label{sect2}

Let $N$ be a  positive integer.
The group
$\mathrm{GL}_2(\mathbb{Z}/N\mathbb{Z})/\{\pm I_2\}$ ($\simeq\mathrm{Gal}(\mathcal{F}_N/\mathcal{F}_1)$) acts on the field
$\mathcal{F}_N$ as follows (\cite[Theorem 3 in Chapter 6]{Lang}): One
can decompose $\mathrm{GL}_2(\mathbb{Z}/N\mathbb{Z})/\{\pm I_2\}$ uniquely as
\begin{equation*}
\mathrm{GL}_2(\mathbb{Z}/N\mathbb{Z})/\{\pm I_2\}=G_N\cdot
\mathrm{SL}_2(\mathbb{Z}/N\mathbb{Z})/\{\pm I_2\}
~\textrm{with}~
G_N=\left\{\begin{bmatrix}1&0\\0&d\end{bmatrix}~|~
d\in(\mathbb{Z}/N\mathbb{Z})^\times\right\}.
\end{equation*}
Let $h(\tau)$ be an element of $\mathcal{F}_N$ whose Fourier expansion with respect to
$q^{1/N}=e^{2\pi\mathrm{i}\tau/N}$ is given by
\begin{equation*}
h(\tau)=\sum_{n\gg-\infty}c_nq^{n/N}\quad(c_n\in\mathbb{Q}(\zeta_N)).
\end{equation*}
\begin{itemize}
\item[(A1)] $\begin{bmatrix}1&0\\0&d\end{bmatrix}\in G_N$
acts on $h(\tau)$ as
\begin{equation*}
h(\tau)^{\left[\begin{smallmatrix}1&0\\0&d\end{smallmatrix}\right]}=
\sum_{n\gg-\infty}c_n^{\sigma_d}q^{n/N},
\end{equation*}
where $\sigma_d$ is the automorphism of the cyclotomic field $\mathbb{Q}(\zeta_N)$
determined by $\zeta_N^{\sigma_d}=\zeta_N^d$.
\item[(A2)] $\alpha\in\mathrm{SL}_2(\mathbb{Z}/N\mathbb{Z})/\{\pm I_2\}$
acts on $h(\tau)$ by
\begin{equation*}
h(\tau)^\alpha=(h\circ\widetilde{\alpha})(\tau),
\end{equation*}
where $\widetilde{\alpha}$ is any inverse image of $\alpha$ under the reduction
$\mathrm{SL}_2(\mathbb{Z})\rightarrow\mathrm{SL}_2(\mathbb{Z}/N\mathbb{Z})/\{\pm I_2\}$.
\end{itemize}
\par
For a lattice $\Lambda$ in $\mathbb{C}$, let
\begin{equation*}
g_2(\Lambda)=60\sum_{\lambda\in\Lambda\setminus\{0\}}\frac{1}{\lambda^4},\quad
g_3(\Lambda)=140\sum_{\lambda\in\Lambda\setminus\{0\}}\frac{1}{\lambda^6}
\quad\textrm{and}\quad
\Delta(\Lambda)=g_2(\Lambda)^3-27g_3(\Lambda)^2.
\end{equation*}
The \textit{Weierstrass $\wp$-function} relative to $\Lambda$ is defined by
\begin{equation*}
\wp(z;\,\Lambda)=
\frac{1}{z^2}+\sum_{\lambda\in\Lambda\setminus\{0\}}\left\{\frac{1}{(z-\lambda)^2}-\frac{1}{\lambda^2}
\right\}
\quad(z\in\mathbb{C})
\end{equation*}
with a double pole at each lattice point, and no other poles (\cite[p. 8]{Lang}).
By the \textit{Weierstrass $\sigma$-function} relative to $\Lambda$
we mean the infinite product
\begin{equation*}
\sigma(z;\,\Lambda)=z\prod_{\lambda\in \Lambda\setminus\{0\}}\left(1-\frac{z}{\lambda}\right)
e^{z/\lambda+(1/2)(z/\lambda)^2}\quad(z\in\mathbb{C}).
\end{equation*}
Taking logarithmic derivative, we derive the \textit{Weierstrass
$\zeta$-function}
\begin{equation*}
\zeta(z;\,\Lambda)=\frac{\sigma'(z;\,\Lambda)}{\sigma(z;\,\Lambda)}
=\frac{1}{z}+\sum_{\lambda\in
\Lambda\setminus\{0\}}\left(\frac{1}{z-\lambda}+\frac{1}{\lambda}+
\frac{z}{\lambda^2}\right)\quad(z\in\mathbb{C}).
\end{equation*}
Since $\zeta'(z;\,\Lambda)=-\wp(z;\,\Lambda)$ which is periodic with respect to $\Lambda$,
for each $\lambda\in\Lambda$ we obtain a constant $\eta(\lambda;\,\Lambda)$ satisfying
\begin{equation*}
\zeta(z+\lambda;\,\Lambda)-\zeta(z;\,\Lambda)=\eta(\lambda;\,\Lambda)\quad(z\in\mathbb{C}).
\end{equation*}
\par
Now, let $\mathbf{v}=\begin{bmatrix}v_1\\v_2
\end{bmatrix}\in\mathbb{Q}^2\setminus\mathbb{Z}^2$. We define
the \textit{Fricke function}
$f_\mathbf{v}(\tau)$ by
\begin{equation}\label{Fricke}
f_\mathbf{v}(\tau)
=-2^73^5\frac{g_2(\tau)g_3(\tau)}{\Delta(\tau)}\wp_\mathbf{v}(\tau)
\quad(\tau\in\mathbb{H}),
\end{equation}
where
$g_2(\tau)=g_2([\tau,\,1])$,
$g_3(\tau)=g_3([\tau,\,1])$,
$\Delta(\tau)=\Delta([\tau,\,1])$ and
$\wp_\mathbf{v}(\tau)=\wp(v_1\tau+v_2;\,[\tau,1])$.
Note that $g_2(\tau)$, $g_3(\tau)$ and $\Delta(\tau)$ are holomorphic on
$\mathbb{H}$, and $\Delta(\tau)$ has no zeros on $\mathbb{H}$
(\cite[Theorem 3 in Chapter 3]{Lang}).
We also define
the \textit{Siegel function} $g_\mathbf{v}(\tau)$ by
\begin{equation}\label{Siegel}
g_\mathbf{v}(\tau)=
e^{-(v_1\eta(\tau;\,[\tau,\,1])+
v_2\eta(1;\,[\tau,\,1]))
(v_1\tau+v_2)/2}\sigma(v_1\tau+v_2;\,[\tau,\,1])\eta(\tau)^2
\quad(\tau\in\mathbb{H}),
\end{equation}
where
\begin{equation*}
\eta(\tau)=\sqrt{2\pi}\zeta_8q^{1/24}\prod_{n=1}^\infty
(1-q^n)\quad(\tau\in\mathbb{H})
\end{equation*}
is the \textit{Dedekind $\eta$-function}.
As is well known, if $N\geq2$, then $\{f_\mathbf{v}(\tau)\}_{\mathbf{v}\in\mathcal{V}_N}$ and
$\{g_\mathbf{v}(\tau)^{12N}\}_{\mathbf{v}\in\mathcal{V}_N}$ are Fricke families of level $N$ (\cite[$\S$6.2 and 6.3]{Lang} and \cite[Proposition 1.3 in Chapter 2]{K-L}).
Moreover, $\{f_\mathbf{v}(\tau)\}_{\mathbf{v}\in\mathcal{V}_N}$ is primitive
(\cite[Lemma 10.4]{Cox} and the definition (\ref{Fricke})).
\par
For $x\in\mathbb{R}$, let $\langle x\rangle$ be the fractional part of $x$ in the interval $[0,1)$, and set
\begin{equation*}
\langle\pm x\rangle=\min(\langle x\rangle,\langle-x\rangle).
\end{equation*}
Furthermore, let $\mathbf{B}_2(x)=x^2-x+1/6$
be the second Bernoulli polynomial.

\begin{lemma}\label{porder}
Let $N\geq2$.
\begin{itemize}
\item[\textup{(i)}] If $\mathbf{v}=\begin{bmatrix}v_1\\v_2\end{bmatrix}
\in(1/N)\mathbb{Z}^2\setminus\mathbb{Z}^2$, then we have
$\mathrm{ord}_q~g_\mathbf{v}(\tau)=(1/2)\mathbf{B}_2(\langle v_1\rangle)$.
\item[\textup{(ii)}] Let $\mathbf{u},\,\mathbf{v},\,\mathbf{u}',\,\mathbf{v}'\in(1/N)\mathbb{Z}^2\setminus\mathbb{Z}^2$
such that $\mathbf{u}\not\equiv\pm\mathbf{v}\Mod{\mathbb{Z}^2}$ and
$\mathbf{u}'\not\equiv\pm\mathbf{v}'\Mod{\mathbb{Z}^2}$.
Then, the function
\begin{equation*}
\frac{f_\mathbf{u}(\tau)-f_\mathbf{v}(\tau)}
{f_{\mathbf{u}'}(\tau)-f_{\mathbf{v}'}(\tau)}=
\frac{\wp_\mathbf{u}(\tau)-\wp_\mathbf{v}(\tau)}
{\wp_{\mathbf{u}'}(\tau)-\wp_{\mathbf{v}'}(\tau)}
\end{equation*}
in $\mathcal{F}_N$
has neither zeros nor poles on $\mathbb{H}$.
\item[\textup{(iii)}] If $\mathbf{u}=\begin{bmatrix}u_1\\u_2\end{bmatrix},\,
\mathbf{v}=\begin{bmatrix}v_1\\v_2\end{bmatrix}\in\mathcal{V}_N$ such that $\mathbf{u}+\mathbf{v},\,\mathbf{u}-\mathbf{v}\in\mathcal{V}_N$, then
\begin{equation*}
\mathrm{ord}_q~(\wp_\mathbf{u}(\tau)-\wp_\mathbf{v}(\tau))=
\min(\langle\pm u_1\rangle,\,\langle\pm v_1\rangle).
\end{equation*}
\end{itemize}
\end{lemma}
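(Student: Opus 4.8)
The plan is to treat (i) first, to deduce (iii) from it, and to dispatch (ii) on its own since it needs no $q$-expansions.

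For (i), I would start from the standard infinite product expansion of the Siegel function (\cite[Chapter 2]{K-L}),
\[
g_\mathbf{v}(\tau)=-q^{(1/2)\mathbf{B}_2(v_1)}e^{\pi\mathrm{i}v_2(v_1-1)}
\prod_{n=0}^\infty\left(1-q^{n+v_1}e^{2\pi\mathrm{i}v_2}\right)
\prod_{n=1}^\infty\left(1-q^{n-v_1}e^{-2\pi\mathrm{i}v_2}\right),
\]
where $q=e^{2\pi\mathrm{i}\tau}$ and $\mathbf{v}=\begin{bmatrix}v_1\\v_2\end{bmatrix}$. Since $g_{\mathbf{v}+\mathbf{m}}$ and $g_\mathbf{v}$ differ only by a root of unity for $\mathbf{m}\in\mathbb{Z}^2$, the quantity $\mathrm{ord}_q\,g_\mathbf{v}$ is unchanged if I replace $v_1$ by $\langle v_1\rangle\in[0,1)$. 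With $0\leq v_1<1$ every factor of the two products has a strictly positive $q$-exponent, except the $n=0$ factor of the first product when $v_1=0$, which then equals the nonzero constant $1-e^{2\pi\mathrm{i}v_2}$ (nonzero because $\mathbf{v}\notin\mathbb{Z}^2$ forces $v_2\notin\mathbb{Z}$). Hence both products contribute $q$-order $0$, the leading power is $q^{(1/2)\mathbf{B}_2(\langle v_1\rangle)}$, and the claim follows.

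For (ii), the asserted equality of the two quotients is immediate from (\ref{Fricke}): writing $c(\tau)=-2^73^5g_2(\tau)g_3(\tau)/\Delta(\tau)$ one has $f_\mathbf{w}=c\,\wp_\mathbf{w}$, so the common factor $c$ cancels in the quotient of differences and one is left with the $\wp$-quotient. It therefore suffices to show that $\wp_\mathbf{u}(\tau)-\wp_\mathbf{v}(\tau)$ is holomorphic and nowhere vanishing on $\mathbb{H}$ whenever $\mathbf{u}\not\equiv\pm\mathbf{v}\Mod{\mathbb{Z}^2}$, and likewise for the primed pair; the full quotient will then have neither zeros nor poles. Fix $\tau\in\mathbb{H}$ and put $\Lambda=[\tau,1]$. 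Because $\mathbf{u},\mathbf{v}\notin\mathbb{Z}^2$ and $\tau,1$ are $\mathbb{R}$-linearly independent, the points $u_1\tau+u_2$ and $v_1\tau+v_2$ lie off $\Lambda$, so $\wp_\mathbf{u}(\tau),\wp_\mathbf{v}(\tau)$ are finite and $\wp_\mathbf{u}-\wp_\mathbf{v}$ is holomorphic on $\mathbb{H}$. Since $\wp(\,\cdot\,;\Lambda)$ is even of order $2$, one has $\wp_\mathbf{u}(\tau)=\wp_\mathbf{v}(\tau)$ only if $u_1\tau+u_2\equiv\pm(v_1\tau+v_2)\Mod{\Lambda}$, which by the same $\mathbb{R}$-independence forces $\mathbf{u}\equiv\pm\mathbf{v}\Mod{\mathbb{Z}^2}$, contrary to hypothesis. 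Hence $\wp_\mathbf{u}-\wp_\mathbf{v}$ never vanishes on $\mathbb{H}$, as required.

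For (iii), I would start from the classical $\sigma$-function identity (\cite[Chapter 1]{Lang})
\[
\wp(z;\Lambda)-\wp(w;\Lambda)=
-\frac{\sigma(z+w;\Lambda)\,\sigma(z-w;\Lambda)}{\sigma(z;\Lambda)^2\,\sigma(w;\Lambda)^2},
\]
applied with $z=u_1\tau+u_2$, $w=v_1\tau+v_2$ and $\Lambda=[\tau,1]$. Solving (\ref{Siegel}) for $\sigma$ and substituting, the exponential prefactors combine into $\exp\{E(\mathbf{u}+\mathbf{v})+E(\mathbf{u}-\mathbf{v})-2E(\mathbf{u})-2E(\mathbf{v})\}$, where $E(\mathbf{w})$ is one half of the quadratic form $(w_1\eta(\tau;\Lambda)+w_2\eta(1;\Lambda))(w_1\tau+w_2)$ in $\mathbf{w}$; by the parallelogram law for quadratic forms this exponent vanishes identically, while the powers of $\eta(\tau)^2$ collect to $\eta(\tau)^4$. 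This yields
\[
\wp_\mathbf{u}(\tau)-\wp_\mathbf{v}(\tau)=
-\eta(\tau)^4\,
\frac{g_{\mathbf{u}+\mathbf{v}}(\tau)\,g_{\mathbf{u}-\mathbf{v}}(\tau)}
{g_\mathbf{u}(\tau)^2\,g_\mathbf{v}(\tau)^2}.
\]
Taking $\mathrm{ord}_q$, using $\mathrm{ord}_q\,\eta(\tau)^4=1/6$ together with part (i) applied to each Siegel function (legitimate since $\mathbf{u},\mathbf{v},\mathbf{u}\pm\mathbf{v}\in\mathcal{V}_N\subset(1/N)\mathbb{Z}^2\setminus\mathbb{Z}^2$), I obtain
\[
\mathrm{ord}_q\,(\wp_\mathbf{u}(\tau)-\wp_\mathbf{v}(\tau))=
\tfrac16+\tfrac12\mathbf{B}_2(\langle u_1+v_1\rangle)+\tfrac12\mathbf{B}_2(\langle u_1-v_1\rangle)
-\mathbf{B}_2(\langle u_1\rangle)-\mathbf{B}_2(\langle v_1\rangle).
\]
The final step is the elementary identity asserting that the right-hand side equals $\min(\langle\pm u_1\rangle,\langle\pm v_1\rangle)$. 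I expect this Bernoulli-polynomial computation to be the main obstacle: it is not a single formula but a case analysis in the reduced variables $\langle u_1\rangle,\langle v_1\rangle\in[0,1)$, according to whether $\langle u_1\rangle+\langle v_1\rangle$ is less than or at least $1$ and which of $\langle u_1\rangle,\langle v_1\rangle$ is larger, with the identity to be verified in each regime (the cases $u_1=v_1=1/2$ and $u_1=1/3,\,v_1=1/4$, giving $1/2$ and $1/4$, serve as quick sanity checks). Everything else, namely the product expansion, the $\sigma$-identity, and the cancellation of the exponential factor, is standard once set up.
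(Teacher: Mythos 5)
Your proposal is correct, but it is a genuinely different route from the paper's, in the trivial sense that the paper offers no argument at all: it disposes of all three parts by citation to Kubert--Lang (\cite[p.~39]{K-L} for (i), \cite[Theorem 6.1 in Chapter 2]{K-L} for (ii), \cite[Lemma 6.2 in Chapter 2]{K-L} for (iii)). What you have done is reconstruct self-contained proofs. For (i) and (iii) your reconstruction runs along the same lines as the cited results: the $q$-product expansion of $g_\mathbf{v}$, and the factorization $\wp(z)-\wp(w)=-\sigma(z+w)\sigma(z-w)/\sigma(z)^2\sigma(w)^2$, which after solving (\ref{Siegel}) for $\sigma$ and killing the exponential prefactor by the parallelogram law yields $\wp_\mathbf{u}-\wp_\mathbf{v}=-\eta(\tau)^4\,g_{\mathbf{u}+\mathbf{v}}g_{\mathbf{u}-\mathbf{v}}/(g_\mathbf{u}^2g_\mathbf{v}^2)$; this is essentially how Kubert--Lang obtain their Lemma 6.2. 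Your part (ii) is more elementary than the source: instead of the Klein-form machinery behind \cite[Theorem 6.1 in Chapter 2]{K-L}, you use only that $\wp(\,\cdot\,;\Lambda)$ is even of degree $2$, so that $\wp(z)=\wp(w)$ forces $z\equiv\pm w\Mod{\Lambda}$, together with the $\mathbb{R}$-independence of $\tau$ and $1$; that argument is complete and correct, and buys independence from the modular-units formalism. The one place you stop short is the Bernoulli identity at the end of (iii), which you set up correctly but defer as "the main obstacle." It does close, and is only a two-case computation: putting $x=\langle u_1\rangle$, $y=\langle v_1\rangle$ and assuming $x\geq y$ by symmetry (legitimate since $\mathbf{B}_2(\langle t\rangle)=\mathbf{B}_2(\langle -t\rangle)$), direct expansion gives
\begin{equation*}
\tfrac16+\tfrac12\mathbf{B}_2(\langle x+y\rangle)+\tfrac12\mathbf{B}_2(\langle x-y\rangle)
-\mathbf{B}_2(x)-\mathbf{B}_2(y)
=\begin{cases} y & \textrm{if}~x+y<1,\\ 1-x & \textrm{if}~x+y\geq 1,\end{cases}
\end{equation*}
and in each regime this value equals $\min(x,1-x,y,1-y)=\min(\langle\pm u_1\rangle,\langle\pm v_1\rangle)$ (when $x+y<1$ one has $y\leq x$ and $y<1-x$; when $x+y\geq1$ one has $1-x\leq y\leq x$). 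So the step you flagged as the obstacle is routine, and your proof is sound as proposed.
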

\begin{proof}
\begin{enumerate}
\item[(i)] See \cite[p. 39]{K-L}.
\item[(ii)] See \cite[Theorem 6.1 in Chapter 2]{K-L}.
\item[(iii)] See \cite[Lemma 6.2 in Chapter 2]{K-L}.
\end{enumerate}
\end{proof}

\section {Examples of primitive and totally primitive Fricke families}

Let $N\geq2$. In this section, we shall give several examples of primitive and totally primitive Fricke families.

\begin{example}\label{eg1}
Consider the Fricke family $\{g_\mathbf{v}(\tau)^{12N}\}_{\mathbf{v}\in
\mathcal{V}_N}$ consisting of $12N$th powers of Siegel functions.
We want to show that the family is totally primitive.
\par
Suppose that
\begin{equation*}
g_\mathbf{u}(\tau)^{12Nn}=g_\mathbf{v}(\tau)^{12Nn}\quad
\textrm{for some}~\mathbf{u},\,\mathbf{v}\in\mathcal{V}_N~\textrm{and}~n\in\mathbb{N}.
\end{equation*}
Since there is an element $\alpha$ of $\mathrm{GL}_2(\mathbb{Z}/N\mathbb{Z})/\{\pm I_2\}$
such that
\begin{equation*}
\alpha^T\mathbf{u}\equiv\pm\begin{bmatrix}1/N\\0\end{bmatrix}
\Mod{\mathbb{Z}^2},
\end{equation*}
we may assume by (F3) that
\begin{equation}\label{equal}
g_{\left[\begin{smallmatrix}1/N\\0\end{smallmatrix}\right]}(\tau)^{12Nn}=g_\mathbf{v}(\tau)^{12Nn}~\textrm{for}~\mathbf{v}=\begin{bmatrix}v_1\\v_2\end{bmatrix}\in\mathcal{V}_N.
\end{equation}
Applying $\begin{bmatrix}0&1\\-1&0\end{bmatrix}$ to both sides of (\ref{equal}), we attain that
\begin{equation}\label{equal2}
g_{\left[\begin{smallmatrix}0\\1/N\end{smallmatrix}\right]}(\tau)^{12Nn}=
g_{\left[\begin{smallmatrix}-v_2\\v_1\end{smallmatrix}\right]}(\tau)^{12Nn}.
\end{equation}
By Lemma \ref{porder} (i), we obtain from (\ref{equal}) and (\ref{equal2}) that
\begin{equation*}
6Nn\mathbf{B}_2(1/N)=
6Nn\mathbf{B}_2(\langle v_1\rangle)\quad\textrm{and}\quad
6Nn\mathbf{B}_2(0)=
6Nn\mathbf{B}_2(\langle-v_2\rangle),
\end{equation*}
respectively. Thus we deduce by considering the graph of $y=\mathbf{B}_2(x)$ that
\begin{equation*}
v_1\equiv\pm 1/N\Mod{\mathbb{Z}}\quad\textrm{and}\quad
v_2\equiv0\Mod{\mathbb{Z}},
\end{equation*}
and hence $\mathbf{v}\equiv\pm\begin{bmatrix}1/N\\0\end{bmatrix}\Mod{\mathbb{Z}^2}$.
This observation implies that the Fricke family $\{g_\mathbf{v}(\tau)^{12N}\}_{\mathbf{v}\in\mathcal{V}_N}$ is totally primitive.
\end{example}

\begin{example}\label{eg2}
Assume that $N$ is odd and the set
\begin{equation*}
Q_N=
[1,N/2]\cap
\{a\in\mathbb{Z}~|~a\not\equiv\pm1\Mod{N}~\textrm{and}~a^2\equiv\pm1\Mod{N}\}.
\end{equation*}
is nonempty. Let $a\in Q_N$. If we set
\begin{equation}\label{hdef}
h_\mathbf{v}(\tau)=f_\mathbf{v}(\tau)-f_{a\mathbf{v}}(\tau)\quad(\mathbf{v}\in\mathcal{V}_N),
\end{equation}
then we get a Fricke family $\{h_\mathbf{v}(\tau)\}_{\mathbf{v}\in\mathcal{V}_N}$ of level $N$. We want to show that
$\{h_\mathbf{v}(\tau)\}_{\mathbf{v}\in\mathcal{V}_N}$ is primitive, but not
totally primitive.
\par
Suppose that
\begin{equation*}
h_\mathbf{a}(\tau)=h_\mathbf{b}(\tau)\quad\textrm{for some}~\mathbf{a},\,\mathbf{b}\in\mathcal{V}_N.
\end{equation*}
By applying an action of the group $\mathrm{GL}_2(\mathbb{Z}/N\mathbb{Z})/\{\pm I_2\}$,
if necessary, we may assume by (F3) that
\begin{equation}\label{h1b}
h_{\left[\begin{smallmatrix}1/N\\0\end{smallmatrix}\right]}(\tau)=h_\mathbf{b}(\tau)
\quad\textrm{with}~
\mathbf{b}=\begin{bmatrix}b_1\\b_2\end{bmatrix}.
\end{equation}
The action of $\begin{bmatrix}0&1\\-1&0\end{bmatrix}$ on both sides of (\ref{h1b}) yields
\begin{equation}\label{h0b}
h_{\left[\begin{smallmatrix}0\\1/N\end{smallmatrix}\right]}(\tau)=
h_{\left[\begin{smallmatrix}-b_2\\b_1\end{smallmatrix}\right]}(\tau).
\end{equation}
By the definitions (\ref{Fricke}) and (\ref{hdef}), we obtain
from (\ref{h1b}) and (\ref{h0b}) that
\begin{eqnarray*}
\wp_{\left[\begin{smallmatrix}1/N\\0\end{smallmatrix}\right]}(\tau)-
\wp_{\left[\begin{smallmatrix}a/N\\0\end{smallmatrix}\right]}(\tau)&=&
\wp_{\left[\begin{smallmatrix}b_1\\b_2\end{smallmatrix}\right]}(\tau)-
\wp_{\left[\begin{smallmatrix}ab_1\\ab_2\end{smallmatrix}\right]}(\tau),\\
\wp_{\left[\begin{smallmatrix}0\\1/N\end{smallmatrix}\right]}(\tau)-
\wp_{\left[\begin{smallmatrix}0\\a/N\end{smallmatrix}\right]}(\tau)&=&
\wp_{\left[\begin{smallmatrix}-b_2\\b_1\end{smallmatrix}\right]}(\tau)-
\wp_{\left[\begin{smallmatrix}-ab_2\\ab_1\end{smallmatrix}\right]}(\tau).
\end{eqnarray*}
Comparing the $q$-orders by making use of Lemma \ref{porder} (iii), we get
\begin{equation*}
1/N=\min(\langle\pm b_1\rangle,\,\langle\pm ab_1\rangle)
\quad\textrm{and}\quad0=\min(\langle\pm b_2\rangle,\,\langle\pm ab_2\rangle),
\end{equation*}
respectively.
We then deduce from the fact $a^2\equiv\pm1\Mod{N}$ that
\begin{equation*}
b_1\equiv\pm1/N~\textrm{or}~\pm a/N\Mod{\mathbb{Z}}
\quad\textrm{and}\quad b_2\equiv0\Mod{\mathbb{Z}}.
\end{equation*}
If $b_1\equiv\pm a/N\Mod{\mathbb{Z}}$, then we see that
\begin{eqnarray*}
f_{\left[\begin{smallmatrix}1/N\\0\end{smallmatrix}\right]}(\tau)-
f_{\left[\begin{smallmatrix}a/N\\0\end{smallmatrix}\right]}(\tau)
&=&h_{\left[\begin{smallmatrix}1/N\\0\end{smallmatrix}\right]}(\tau)
\quad\textrm{by the definition (\ref{hdef})}\\
&=&h_{\left[\begin{smallmatrix}a/N\\0\end{smallmatrix}\right]}(\tau)\quad
\textrm{by (\ref{h1b}) and (F2)}\\
&=&f_{\left[\begin{smallmatrix}a/N\\0\end{smallmatrix}\right]}(\tau)-
f_{\left[\begin{smallmatrix}a^2/N\\0\end{smallmatrix}\right]}(\tau)
\quad\textrm{by the definition (\ref{hdef})}\\
&=&f_{\left[\begin{smallmatrix}a/N\\0\end{smallmatrix}\right]}(\tau)-
f_{\left[\begin{smallmatrix}1/N\\0\end{smallmatrix}\right]}(\tau)\quad\textrm{by the fact $a^2\equiv\pm1\Mod{N}$ and (F2)},
\end{eqnarray*}
from which it follows that $f_{\left[\begin{smallmatrix}1/N\\0\end{smallmatrix}\right]}(\tau)=
f_{\left[\begin{smallmatrix}a/N\\0\end{smallmatrix}\right]}(\tau)$.
But, this is impossible because $\{f_\mathbf{v}(\tau)\}_{\mathbf{v}\in\mathcal{V}_N}$ is primitive and $a\not\equiv\pm1\Mod{N}$. Thus we attain $\mathbf{b}\equiv\pm
\begin{bmatrix}1/N\\0\end{bmatrix}\Mod{\mathbb{Z}^2}$, which
shows that $\{h_\mathbf{v}(\tau)\}_{\mathbf{v}\in\mathcal{V}_N}$ is primitive.
\par
On the other hand, we derive by the definition (\ref{hdef}), the fact $a^2\equiv
\pm1\Mod{N}$ and (F2) that
\begin{equation*}
h_{a\mathbf{v}}(\tau)=f_{a\mathbf{v}}(\tau)-f_{a^2\mathbf{v}}(\tau)
=f_{a\mathbf{v}}(\tau)-f_\mathbf{v}(\tau)=-h_\mathbf{v}(\tau)\quad(\mathbf{v}\in\mathcal{V}_N),
\end{equation*}
which gives rise to $h_{a\mathbf{v}}(\tau)^2=h_\mathbf{v}(\tau)^2$.
Here we note that $a\mathbf{v}\not\equiv\pm\mathbf{v}\Mod{\mathbb{Z}^2}$
due to the fact $a\not\equiv\pm1\Mod{N}$.
Hence $\{h_\mathbf{v}(\tau)^2\}_{\mathbf{v}\in\mathcal{V}_N}$ is not
primitive.
\par
Therefore, the Fricke family $\{h_\mathbf{v}(\tau)\}_{\mathbf{v}\in\mathcal{V}_N}$ is primitive, whereas not totally primitive.
\end{example}

\begin{example}\label{eg3}
Let $N\geq7$ and $\gcd(6,\,N)=1$.
We claim that the Fricke family
$\{f_\mathbf{v}(\tau)\}_{\mathbf{v}\in\mathcal{V}_N}$ is totally primitive.
\par
Suppose on the contrary that it is not totally primitive.
Then we have
\begin{equation*}
f_\mathbf{a}(\tau)^n=f_\mathbf{b}(\tau)^n
\end{equation*}
for some integer $n\geq2$ and $\mathbf{a},\,\mathbf{b}\in\mathcal{V}_N$ such that $\mathbf{a}
\not\equiv\pm\mathbf{b}\Mod{\mathbb{Z}^2}$. By applying an action of the group $\mathrm{GL}_2(\mathbb{Z}/N\mathbb{Z})/\{\pm I_2\}$, if necessary, we may assume that
\begin{equation}\label{hzh}
f_{\left[\begin{smallmatrix}1/N\\0\end{smallmatrix}\right]}(\tau)=
\zeta f_{\left[\begin{smallmatrix}b_1\\b_2\end{smallmatrix}\right]}(\tau)
\end{equation}
for some $n$th root of unity $\zeta$ and $\begin{bmatrix}b_1\\b_2\end{bmatrix}
\in\mathcal{V}_N$ such that
\begin{equation}\label{bnot1}
\begin{bmatrix}b_1\\
b_2\end{bmatrix}\not\equiv\pm\begin{bmatrix}1/N\\0\end{bmatrix}
\Mod{\mathbb{Z}^2}.
\end{equation}
Through the action of $\begin{bmatrix}1&0\\1&1\end{bmatrix}\in\mathrm{SL}_2(\mathbb{Z}/N
\mathbb{Z})/\{\pm I_2\}$
on both sides of (\ref{hzh}), we get
by (F3) and (A2) that
\begin{equation}\label{hzhh}
f_{\left[\begin{smallmatrix}1/N\\0\end{smallmatrix}\right]}(\tau)
=\zeta f_{\left[\begin{smallmatrix}b_1+b_2\\b_2\end{smallmatrix}\right]}(\tau).
\end{equation}
We see from (\ref{hzh}) and (\ref{hzhh}) that
\begin{equation*}
f_{\left[\begin{smallmatrix}b_1\\b_2\end{smallmatrix}\right]}(\tau)
=f_{\left[\begin{smallmatrix}b_1+b_2\\b_2\end{smallmatrix}\right]}(\tau).
\end{equation*}
Since $\{f_\mathbf{v}(\tau)\}_{\mathbf{v}\in\mathcal{V}_N}$ is primitive, we attain that
\begin{equation*}
\begin{bmatrix}b_1\\b_2\end{bmatrix}\equiv\pm\begin{bmatrix}
b_1+b_2\\b_2\end{bmatrix}\Mod{\mathbb{Z}^2}.
\end{equation*}
If $\begin{bmatrix}b_1\\b_2\end{bmatrix}\equiv-
\begin{bmatrix}b_1+b_2\\b_2\end{bmatrix}\Mod{\mathbb{Z}^2}$, then
we get $2b_1\equiv-b_2\Mod{\mathbb{Z}}$ and $2b_2\equiv0\Mod{\mathbb{Z}}$, and
so $4\begin{bmatrix}b_1\\b_2\end{bmatrix}\equiv\begin{bmatrix}0\\
0\end{bmatrix}\Mod{\mathbb{Z}^2}$. But, this is impossible because
$\begin{bmatrix}b_1\\b_2\end{bmatrix}\in\mathcal{V}_N$ and $N\neq4$.
Thus we must have
\begin{equation*}
\begin{bmatrix}b_1\\b_2\end{bmatrix}\equiv
\begin{bmatrix}b_1+b_2\\b_2\end{bmatrix}\Mod{\mathbb{Z}^2},
\end{equation*}
and hence $b_2\equiv0\Mod{\mathbb{Z}}$.
Write $b_1=a/N$ for an integer $a$ which is relatively prime to $N$ and
$a\not\equiv\pm1\Mod{N}$ by (\ref{bnot1}).
By applying (F2) to the function $f_{\left[\begin{smallmatrix}a/N\\0\end{smallmatrix}\right]}(\tau)$,
we may further assume that
$1<a\leq N/2$.
We then find by (\ref{hzh}) that
\begin{equation*}
\zeta=\frac{f_{\left[\begin{smallmatrix}1/N\\0\end{smallmatrix}\right]}(\tau)}
{f_{\left[\begin{smallmatrix}a/N\\0\end{smallmatrix}\right]}(\tau)},
\end{equation*}
and hence we obtain by acting $\begin{bmatrix}1&0\\0&-1\end{bmatrix}\in G_N$ that
\begin{equation*}
\zeta^{-1}=\frac{f_{\left[\begin{smallmatrix}1/N\\0\end{smallmatrix}\right]}(\tau)}
{f_{\left[\begin{smallmatrix}a/N\\0\end{smallmatrix}\right]}(\tau)}
\end{equation*}
due to (A1) and (F3). Since $\{f_\mathbf{v}(\tau)\}_{\mathbf{v}\in\mathcal{V}_N}$
is primitive and $a\not\equiv\pm1\Mod{N}$, we conclude $\zeta=-1$, and so
\begin{equation}\label{h1-ha}
f_{\left[\begin{smallmatrix}1/N\\0\end{smallmatrix}\right]}(\tau)=-
f_{\left[\begin{smallmatrix}a/N\\0\end{smallmatrix}\right]}(\tau).
\end{equation}
The action of $\begin{bmatrix}a&0\\0&a\end{bmatrix}$ on both sides of
(\ref{h1-ha}) yields
\begin{equation}\label{ha-ha2}
f_{\left[\begin{smallmatrix}a/N\\0\end{smallmatrix}\right]}(\tau)=-
f_{\left[\begin{smallmatrix}a^2/N\\0\end{smallmatrix}\right]}(\tau)
\end{equation}
by (F3). It then follows from (\ref{h1-ha}) and (\ref{ha-ha2}) that
\begin{equation*}
f_{\left[\begin{smallmatrix}1/N\\0\end{smallmatrix}\right]}(\tau)=
f_{\left[\begin{smallmatrix}a^2/N\\0\end{smallmatrix}\right]}(\tau),
\end{equation*}
which implies that
\begin{equation}\label{asquare}
a^2\equiv\pm1\Mod{N}
\end{equation}
because $\{f_\mathbf{v}(\tau)\}_{\mathbf{v}\in\mathcal{V}_N}$ is primitive.
Acting $\begin{bmatrix}2&0\\0&2\end{bmatrix}\in G_N$ on both sides of
(\ref{h1-ha}), we also obtain by (F3) that
\begin{equation}\label{h2-h2a}
f_{\left[\begin{smallmatrix}2/N\\0\end{smallmatrix}\right]}(\tau)=
-f_{\left[\begin{smallmatrix}2a/N\\0\end{smallmatrix}\right]}(\tau).
\end{equation}
We then derive by the definition (\ref{Fricke}),
(\ref{h1-ha}) and (\ref{h2-h2a}) that
\begin{equation*}
\wp_{\left[\begin{smallmatrix}1/N\\0\end{smallmatrix}\right]}(\tau)-
\wp_{\left[\begin{smallmatrix}2/N\\0\end{smallmatrix}\right]}(\tau)
=-\wp_{\left[\begin{smallmatrix}a/N\\0\end{smallmatrix}\right]}(\tau)+
\wp_{\left[\begin{smallmatrix}2a/N\\0\end{smallmatrix}\right]}(\tau).
\end{equation*}
By Lemma \ref{porder} (iii) and the fact $1\leq a\leq N/2$, we achieve that
\begin{eqnarray*}
\mathrm{ord}_q~(\wp_{\left[\begin{smallmatrix}1/N\\0\end{smallmatrix}\right]}(\tau)-
\wp_{\left[\begin{smallmatrix}2/N\\0\end{smallmatrix}\right]}(\tau))&=&
\min(\langle\pm1/N\rangle,\,\langle\pm2/N\rangle)\\
&=&1/N\\
&=&\mathrm{ord}_q~(-\wp_{\left[\begin{smallmatrix}a/N\\0\end{smallmatrix}\right]}(\tau)+
\wp_{\left[\begin{smallmatrix}2a/N\\0\end{smallmatrix}\right]}(\tau))\\
&=&\min(\langle\pm a/N\rangle,\,\langle\pm 2a/N\rangle)\\
&=&\min(\min(a/N,\,(N-a)/N),\,\min(2a/N,\,(N-2a)/N))\\
&=&\left\{
\begin{array}{ll}
a/N & \textrm{if}~1\leq a\leq N/3,\\
(N-2a)/N& \textrm{if}~N/3<a\leq N/2.
\end{array}\right.
\end{eqnarray*}
Moreover, since $a\neq1$, we
must get $1/N=(N-2a)/N$, and so $a=(N-1)/2$. We then obtain from (\ref{asquare})
that
\begin{equation*}
(N^2-2N+1)/4\equiv\pm1\Mod{N}.
\end{equation*}
But, this contradicts the assumption $N\geq7$.
\par
Therefore, we conclude that the primitive Fricke family $\{f_\mathbf{v}(\tau)\}_{\mathbf{v}\in\mathcal{V}_N}$ is also totally primitive.
\end{example}

\section {Generators of function fields}

Let $N\geq2$. As an application of (totally) primitive Fricke families,
we shall construct primitive generators of $\mathbb{C}(X(N))$ and $\mathcal{F}_N$
over $\mathbb{C}(X(1))=\mathbb{C}(j(\tau))$
and $\mathcal{F}_1=\mathbb{Q}(j(\tau))$, respectively.

\begin{proposition}\label{three}
Let $\{h_\mathbf{v}(\tau)\}_{\mathbf{v}\in\mathcal{V}_N}$ be a
primitive Fricke family of level $N$. Then we have
\begin{equation*}
\mathbb{C}(X(N))=\mathbb{C}\left(j(\tau),h_{\left[\begin{smallmatrix}1/N\\0\end{smallmatrix}\right]}(\tau),\,
h_{\left[\begin{smallmatrix}0\\1/N\end{smallmatrix}\right]}(\tau)\right).
\end{equation*}
\end{proposition}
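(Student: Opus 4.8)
The plan is to argue by Galois theory for the extension $\mathbb{C}(X(N))/\mathbb{C}(j(\tau))$, which the introduction identifies as Galois with group $\mathrm{SL}_2(\mathbb{Z}/N\mathbb{Z})/\{\pm I_2\}$. Setting $L=\mathbb{C}(j(\tau),\,h_{[1/N,\,0]^T}(\tau),\,h_{[0,\,1/N]^T}(\tau))$, I first observe that each $h_\mathbf{v}(\tau)$ lies in $\mathcal{F}_N\subseteq\mathbb{C}(X(N))$, so $L$ is an intermediate field of this Galois extension. It then suffices to show that the subgroup $H\leq\mathrm{SL}_2(\mathbb{Z}/N\mathbb{Z})/\{\pm I_2\}$ fixing $L$ pointwise is trivial, for the Galois correspondence will then give $L=\mathbb{C}(X(N))^H=\mathbb{C}(X(N))$.

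The next step is to determine $H$ explicitly. Any $\alpha$ in the Galois group fixes $j(\tau)$ automatically, so $\alpha\in H$ exactly when it fixes the two distinguished members. By (F3), $\alpha$ acts by $h_{[1/N,\,0]^T}^\alpha=h_{\alpha^T[1/N,\,0]^T}$ and $h_{[0,\,1/N]^T}^\alpha=h_{\alpha^T[0,\,1/N]^T}$, and here is where primitivity enters: it upgrades the equalities $h_{\alpha^T\mathbf{v}}=h_\mathbf{v}$ into congruences, so that $\alpha\in H$ becomes equivalent to
\[
\alpha^T\begin{bmatrix}1/N\\0\end{bmatrix}\equiv\pm\begin{bmatrix}1/N\\0\end{bmatrix}\Mod{\mathbb{Z}^2}
\quad\text{and}\quad
\alpha^T\begin{bmatrix}0\\1/N\end{bmatrix}\equiv\pm\begin{bmatrix}0\\1/N\end{bmatrix}\Mod{\mathbb{Z}^2}.
\]

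Writing $\alpha^T=\begin{bmatrix}p&q\\r&s\end{bmatrix}$ with $ps-qr\equiv1\Mod{N}$, these conditions read $p\equiv\pm1$, $r\equiv0$ and $s\equiv\pm1$, $q\equiv0\Mod{N}$, the two signs being a priori independent. I expect the one genuine subtlety to be ruling out the mixed-sign cases: the combinations $\alpha^T\equiv\mathrm{diag}(1,-1)$ and $\alpha^T\equiv\mathrm{diag}(-1,1)$ both have determinant $-1$, so they are excluded by the constraint $ps\equiv1$ that records membership in $\mathrm{SL}_2$ rather than $\mathrm{GL}_2$; this determinant argument is the heart of the matter and is exactly where the proposition uses the smaller group. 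The surviving possibilities are $\alpha^T\equiv I_2$ and $\alpha^T\equiv-I_2\Mod{N}$, each of which represents the identity class in $\mathrm{SL}_2(\mathbb{Z}/N\mathbb{Z})/\{\pm I_2\}$. Hence $H$ is trivial and $L=\mathbb{C}(X(N))$, with everything outside the determinant step amounting to routine bookkeeping with the congruences.
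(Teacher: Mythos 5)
Your proof is correct and takes essentially the same route as the paper's: both apply the Galois correspondence for $\mathbb{C}(X(N))/\mathbb{C}(j(\tau))$, use (F3) together with primitivity to show that any Galois element fixing the two distinguished functions must send $\left[\begin{smallmatrix}1/N\\0\end{smallmatrix}\right]$ and $\left[\begin{smallmatrix}0\\1/N\end{smallmatrix}\right]$ to $\pm$ themselves modulo $\mathbb{Z}^2$, and then use the determinant condition $ad-bc=1$ to rule out the mixed-sign possibilities. The only cosmetic difference is that you work directly in $\mathrm{SL}_2(\mathbb{Z}/N\mathbb{Z})/\{\pm I_2\}$, whereas the paper lifts to a matrix $\gamma\in\mathrm{SL}_2(\mathbb{Z})$ and concludes $\gamma\in\pm\Gamma(N)$; these are equivalent formulations of the same argument.
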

\begin{proof}
Recall that $\mathbb{C}(X(N))$ is a Galois extension of $\mathbb{C}(X(1))$ with
\begin{equation*}
\mathrm{Gal}(\mathbb{C}(X(N))/\mathbb{C}(X(1)))\simeq\mathrm{SL}_2(\mathbb{Z})/\pm\Gamma(N).
\end{equation*}
Let $\gamma=\begin{bmatrix}a&b\\c&d\end{bmatrix}$ be an element of $\mathrm{SL}_2(\mathbb{Z})$ which leaves both $h_{\left[\begin{smallmatrix}1/N\\0\end{smallmatrix}
\right]}(\tau)$ and $h_{\left[\begin{smallmatrix}0\\1/N\end{smallmatrix}\right]}(\tau)$ fixed. We
then see by (F3) that
\begin{equation*}
h_{\left[\begin{smallmatrix}1/N\\0\end{smallmatrix}\right]}(\tau)=
h_{\left[\begin{smallmatrix}1/N\\0\end{smallmatrix}\right]}(\tau)^\gamma=
h_{\left[\begin{smallmatrix}a/N\\b/N\end{smallmatrix}\right]}(\tau)
\quad\textrm{and}\quad
h_{\left[\begin{smallmatrix}0\\1/N\end{smallmatrix}\right]}(\tau)=
h_{\left[\begin{smallmatrix}0\\1/N\end{smallmatrix}\right]}(\tau)^\gamma=
h_{\left[\begin{smallmatrix}c/N\\d/N\end{smallmatrix}\right]}(\tau).
\end{equation*}
Now that $\{h_\mathbf{v}(\tau)\}_{\mathbf{v}\in\mathcal{V}_N}$ is primitive, we get
\begin{equation*}
\begin{bmatrix}a/N\\b/N\end{bmatrix}\equiv\pm
\begin{bmatrix}1/N\\0\end{bmatrix}\quad\textrm{and}\quad
\begin{bmatrix}c/N\\d/N\end{bmatrix}\equiv\pm
\begin{bmatrix}0\\1/N\end{bmatrix}\Mod{\mathbb{Z}^2}.
\end{equation*}
Moreover, we deduce from $\det(\gamma)=ad-bc=1$ that
$a\equiv d\equiv\pm1$; and hence
$\gamma\equiv\pm I_2\Mod{N}$ and so
$\gamma\in\pm\Gamma(N)$.
\par
This yields by Galois theory that
$h_{\left[\begin{smallmatrix}1/N\\0
\end{smallmatrix}\right]}(\tau)$ and $h_{\left[\begin{smallmatrix}0\\1/N\end{smallmatrix}\right]}(\tau)$
generate $\mathbb{C}(X(N))$ over $\mathbb{C}(X(1))=\mathbb{C}(j(\tau))$.
\end{proof}

\begin{example}
Since $\{g_\mathbf{v}(\tau)^{12N}\}_{\mathbf{v}\in\mathcal{V}_N}$ is totally primitive by  Example \ref{eg1},
$\{g_\mathbf{v}(\tau)^{12Nn}\}_{\mathbf{v}\in\mathcal{V}_N}$
is primitive for each positive integer $n$.
By applying Proposition \ref{three} to each family
$\{g_\mathbf{v}(\tau)^{12Nn}\}_{\mathbf{v}\in\mathcal{V}_N}$ and
using the fact that $\mathbb{C}(X(N))$ is a field,
we obtain
\begin{equation*}
\mathbb{C}(X(N))=\mathbb{C}\left(j(\tau),\,g_{\left[\begin{smallmatrix}
1/N\\0\end{smallmatrix}\right]}(\tau)^{12Nn},\,
g_{\left[\begin{smallmatrix}
0\\1/N\end{smallmatrix}\right]}(\tau)^{12Nn}\right)
\end{equation*}
for any nonzero integer $n$.
\end{example}

\begin{theorem}\label{usingFricke}
We have
\begin{itemize}
\item[\textup{(i)}] $\mathbb{C}(X(N))= \mathbb{C}\left(j(\tau),\,f_{\left[\begin{smallmatrix}1/N\\0\end{smallmatrix}\right]}(\tau)
-f_{\left[\begin{smallmatrix}0\\1/N\end{smallmatrix}\right]}(\tau)^{-1}\right)$.
\item[\textup{(ii)}] $\mathcal{F}_N=
\mathbb{Q}\left(j(\tau),\,\zeta_N\left(f_{\left[\begin{smallmatrix}1/N\\0\end{smallmatrix}\right]}(\tau)
-f_{\left[\begin{smallmatrix}0\\1/N\end{smallmatrix}\right]}(\tau)^{-1}\right)\right)$.
\end{itemize}
\end{theorem}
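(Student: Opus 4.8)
The plan is to prove both parts by Galois theory, reducing each to the claim that the single function
\[
F(\tau)=f_{\left[\begin{smallmatrix}1/N\\0\end{smallmatrix}\right]}(\tau)-f_{\left[\begin{smallmatrix}0\\1/N\end{smallmatrix}\right]}(\tau)^{-1}
\]
has trivial stabilizer over the base field; this is the one-generator sharpening of Proposition \ref{three}. The normalization by the inverse of the second Fricke function is what forces a single generator instead of two, and the whole argument rests on separating the holomorphic part $f_{\left[\begin{smallmatrix}1/N\\0\end{smallmatrix}\right]}$ from the polar part $f_{\left[\begin{smallmatrix}0\\1/N\end{smallmatrix}\right]}^{-1}$.

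For (i), recall that $\mathbb{C}(X(N))/\mathbb{C}(j(\tau))$ is Galois with group $\mathrm{SL}_2(\mathbb{Z})/\pm\Gamma(N)$, so by Galois theory it suffices to show that any $\gamma=\left[\begin{smallmatrix}a&b\\c&d\end{smallmatrix}\right]\in\mathrm{SL}_2(\mathbb{Z})$ fixing $F$ lies in $\pm\Gamma(N)$. Using (F3), $F^\gamma=f_{\left[\begin{smallmatrix}a/N\\b/N\end{smallmatrix}\right]}-f_{\left[\begin{smallmatrix}c/N\\d/N\end{smallmatrix}\right]}^{-1}$, so $F^\gamma=F$ rewrites as
\[
f_{\left[\begin{smallmatrix}1/N\\0\end{smallmatrix}\right]}(\tau)-f_{\left[\begin{smallmatrix}a/N\\b/N\end{smallmatrix}\right]}(\tau)=f_{\left[\begin{smallmatrix}0\\1/N\end{smallmatrix}\right]}(\tau)^{-1}-f_{\left[\begin{smallmatrix}c/N\\d/N\end{smallmatrix}\right]}(\tau)^{-1},
\]
whose left-hand side is holomorphic on $\mathbb{H}$ by (F1). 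If $\left[\begin{smallmatrix}a/N\\b/N\end{smallmatrix}\right]\not\equiv\pm\left[\begin{smallmatrix}1/N\\0\end{smallmatrix}\right]$ and $\left[\begin{smallmatrix}c/N\\d/N\end{smallmatrix}\right]\not\equiv\pm\left[\begin{smallmatrix}0\\1/N\end{smallmatrix}\right]\Mod{\mathbb{Z}^2}$, I would clear denominators to obtain
\[
\frac{f_{\left[\begin{smallmatrix}1/N\\0\end{smallmatrix}\right]}(\tau)-f_{\left[\begin{smallmatrix}a/N\\b/N\end{smallmatrix}\right]}(\tau)}{f_{\left[\begin{smallmatrix}c/N\\d/N\end{smallmatrix}\right]}(\tau)-f_{\left[\begin{smallmatrix}0\\1/N\end{smallmatrix}\right]}(\tau)}=\frac{1}{f_{\left[\begin{smallmatrix}0\\1/N\end{smallmatrix}\right]}(\tau)\,f_{\left[\begin{smallmatrix}c/N\\d/N\end{smallmatrix}\right]}(\tau)};
\]
by Lemma \ref{porder}(ii) the left-hand side has no zeros or poles on $\mathbb{H}$, while the right-hand side has a pole at $\tau=\mathrm{i}$ because $g_3(\mathrm{i})=0$ forces $f_{\left[\begin{smallmatrix}0\\1/N\end{smallmatrix}\right]}(\mathrm{i})=0$, a contradiction. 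In the remaining cases one of the two index pairs is $\equiv\pm$ its target, and then the primitivity of $\{f_\mathbf{v}\}$ together with the displayed identity forces the other pair as well, giving $\left[\begin{smallmatrix}a/N\\b/N\end{smallmatrix}\right]\equiv\pm\left[\begin{smallmatrix}1/N\\0\end{smallmatrix}\right]$ and $\left[\begin{smallmatrix}c/N\\d/N\end{smallmatrix}\right]\equiv\pm\left[\begin{smallmatrix}0\\1/N\end{smallmatrix}\right]$; combined with $\det\gamma=1$ this yields $\gamma\equiv\pm I_2\Mod{N}$, proving (i).

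For (ii), I would first observe that the computation above uses only (F1)--(F3), primitivity and Lemma \ref{porder}(ii), none of which needs $\det=1$; hence for $\alpha\in\mathrm{GL}_2(\mathbb{Z}/N\mathbb{Z})/\{\pm I_2\}$ the relation $F^\alpha=F$ already forces $\alpha\equiv\left[\begin{smallmatrix}\pm1&0\\0&\pm1\end{smallmatrix}\right]\Mod{N}$. Since $\mathcal{F}_N/\mathbb{Q}(j(\tau))$ is Galois with group $\mathrm{GL}_2(\mathbb{Z}/N\mathbb{Z})/\{\pm I_2\}$, it suffices to show that any $\alpha$ fixing $\zeta_N F$ is trivial. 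By (A1)--(A2) such an $\alpha$ acts on the constant $\zeta_N$ by $\zeta_N\mapsto\zeta_N^{\det\alpha}$, so $(\zeta_N F)^\alpha=\zeta_N F$ gives $F^\alpha=\zeta_N^{\,1-\det\alpha}F$.

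The step I expect to be the main obstacle is pinning down the root of unity $\zeta_N^{\,1-\det\alpha}$, and here I would compare leading Fourier coefficients. From the $q$-expansion of $\wp_\mathbf{v}$ one finds that the coefficient of $q^{-1}$ in $f_\mathbf{v}$ equals $1$ when $v_1\not\equiv0\Mod{\mathbb{Z}}$, and equals $1-3/\sin^2(\pi v_2)\le-2$ when $\mathbf{v}=\left[\begin{smallmatrix}0\\v_2\end{smallmatrix}\right]$; in either case $f_\mathbf{v}$ has a simple pole of order $1$ at $\infty$. Consequently $F$ has leading coefficient $1$, and since $f_{\alpha^T\left[\begin{smallmatrix}0\\1/N\end{smallmatrix}\right]}^{-1}$ has positive order the leading coefficient of $F^\alpha$ is that of $f_{\alpha^T\left[\begin{smallmatrix}1/N\\0\end{smallmatrix}\right]}$, which is either $1$ or a real number $\le-2$. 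Equating leading coefficients in $F^\alpha=\zeta_N^{\,1-\det\alpha}F$ and comparing absolute values---the left side has modulus $1$, whereas the alternative value $\le-2$ has modulus $>1$---rules out that alternative and forces $\zeta_N^{\,1-\det\alpha}=1$. Thus $\det\alpha\equiv1$ and $F^\alpha=F$, so the conclusion of the first paragraph gives $\alpha\equiv\left[\begin{smallmatrix}\pm1&0\\0&\pm1\end{smallmatrix}\right]$ with determinant $\equiv1$, i.e.\ $\alpha=I_2$ in $\mathrm{GL}_2(\mathbb{Z}/N\mathbb{Z})/\{\pm I_2\}$. Hence $\zeta_N F$ generates $\mathcal{F}_N$ over $\mathbb{Q}(j(\tau))$, and the delicate points will be the exact normalization of the $q$-expansion of $f_\mathbf{v}$ and the bookkeeping linking $\det\alpha$ to the action on $\zeta_N$.
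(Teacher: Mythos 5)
Your proposal is correct. Part (i) is essentially the paper's own argument: the same rearrangement of $F^\gamma=F$ into a quotient controlled by Lemma \ref{porder} (ii), contradicted at a CM point where the factor $g_2g_3$ in the definition (\ref{Fricke}) vanishes --- you evaluate at $\tau=\mathrm{i}$ (where $g_3=0$) while the paper uses $\tau=\zeta_3$ (where $g_2=0$), an immaterial difference.

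Part (ii) is where you genuinely diverge, writing $F(\tau)=f_{\left[\begin{smallmatrix}1/N\\0\end{smallmatrix}\right]}(\tau)-f_{\left[\begin{smallmatrix}0\\1/N\end{smallmatrix}\right]}(\tau)^{-1}$. The paper first invokes part (i) and Shimura's Theorem 6.6 to get $\mathcal{F}_N=\mathbb{Q}(\zeta_N,j(\tau),F(\tau))$, asserts that $\mathrm{Gal}\bigl(\mathcal{F}_N/\mathbb{Q}(j(\tau),\zeta_NF(\tau))\bigr)$ is a subgroup of $G_N$, and then eliminates a hypothetical $\left[\begin{smallmatrix}1&0\\0&d\end{smallmatrix}\right]$ by pure symmetry: acting by $\left[\begin{smallmatrix}1&0\\0&-1\end{smallmatrix}\right]$ on (\ref{zetafrac}) forces $\zeta_N^{d-1}=\pm1$, and acting by $\left[\begin{smallmatrix}1&1\\0&1\end{smallmatrix}\right]$ on (\ref{2fff}) rules out $-1$ via primitivity. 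You instead compute the stabilizer of $\zeta_NF(\tau)$ inside all of $\mathrm{GL}_2(\mathbb{Z}/N\mathbb{Z})/\{\pm I_2\}$: constants transform by the determinant, so $F^\alpha=\zeta_N^{1-\det\alpha}F$, and you identify the root of unity by comparing coefficients of $q^{-1}$, using the normalization that this coefficient of $f_\mathbf{v}$ equals $1$ when $v_1\not\equiv0\Mod{\mathbb{Z}}$ and equals $1-3/\sin^2(\pi v_2)\leq-2$ when $v_1\equiv0\Mod{\mathbb{Z}}$; these expansion facts are classical (they follow from the $q$-expansion of $\wp$ and the constant $-2^73^5$ in (\ref{Fricke})) and you state them correctly, though a complete writeup should verify them, and the side of modulus $1$ in your comparison is $\zeta_N^{1-\det\alpha}$, i.e.\ the right-hand side, a harmless slip. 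The trade-off is instructive: the paper's route avoids all Fourier-coefficient computations, but its containment claim $\mathrm{Gal}(\mathcal{F}_N/F)\leq G_N$ is asserted without proof and does not follow formally from $\mathcal{F}_N=F(\zeta_N)$ alone --- that equality only yields an injection of $\mathrm{Gal}(\mathcal{F}_N/F)$ into $(\mathbb{Z}/N\mathbb{Z})^\times$ by restriction to constants, and an element of this Galois group could a priori be $\left[\begin{smallmatrix}1&0\\0&d\end{smallmatrix}\right]\beta$ with $1\neq\beta\in\mathrm{SL}_2(\mathbb{Z}/N\mathbb{Z})/\{\pm I_2\}$, which the paper's subsequent matrix computation does not cover. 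Your version treats arbitrary $\alpha$, so it is self-contained, dispenses with Shimura's Theorem 6.6 entirely, and closes exactly the step the paper leaves implicit, at the cost of relying on the explicit normalization of the Fricke functions.
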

\begin{proof}
\begin{enumerate}
\item[(i)]
Let $\gamma=\begin{bmatrix}a&b\\c&d\end{bmatrix}$ be an element of $\mathrm{SL}_2(\mathbb{Z})$ which leaves $f_{\left[\begin{smallmatrix}1/N\\0\end{smallmatrix}\right]}(\tau)-
f_{\left[\begin{smallmatrix}0\\1/N\end{smallmatrix}\right]}(\tau)^{-1}$ fixed.
By (F3) we attain that
\begin{equation*}
f_{\left[\begin{smallmatrix}1/N\\0\end{smallmatrix}\right]}(\tau)-
f_{\left[\begin{smallmatrix}0\\1/N\end{smallmatrix}\right]}(\tau)^{-1}
=(f_{\left[\begin{smallmatrix}1/N\\0\end{smallmatrix}\right]}(\tau)-
f_{\left[\begin{smallmatrix}0\\1/N\end{smallmatrix}\right]}(\tau)^{-1})^\gamma\\
=f_{\left[\begin{smallmatrix}a/N\\b/N\end{smallmatrix}\right]}(\tau)-
f_{\left[\begin{smallmatrix}c/N\\d/N\end{smallmatrix}\right]}(\tau)^{-1},
\end{equation*}
from which it follows that
\begin{equation}\label{difffrac}
f_{\left[\begin{smallmatrix}1/N\\0\end{smallmatrix}\right]}(\tau)-
f_{\left[\begin{smallmatrix}a/N\\b/N\end{smallmatrix}\right]}(\tau)
=f_{\left[\begin{smallmatrix}0\\1/N\end{smallmatrix}\right]}(\tau)^{-1}-
f_{\left[\begin{smallmatrix}c/N\\d/N\end{smallmatrix}\right]}(\tau)^{-1}
=\frac{f_{\left[\begin{smallmatrix}c/N\\d/N\end{smallmatrix}\right]}(\tau)-
f_{\left[\begin{smallmatrix}0\\1/N\end{smallmatrix}\right]}(\tau)}
{f_{\left[\begin{smallmatrix}0\\1/N\end{smallmatrix}\right]}(\tau)
f_{\left[\begin{smallmatrix}c/N\\d/N\end{smallmatrix}\right]}(\tau)}.
\end{equation}
If $\begin{bmatrix}a/N\\c/N\end{bmatrix}\not
\equiv\pm\begin{bmatrix}1/N\\0\end{bmatrix}\Mod{\mathbb{Z}^2}$, then
we deduce
\begin{equation*}
f_{\left[\begin{smallmatrix}1/N\\0\end{smallmatrix}\right]}(\tau)
-f_{\left[\begin{smallmatrix}a/N\\b/N\end{smallmatrix}\right]}(\tau)\neq0
\end{equation*}
due to the fact that $\{f_\mathbf{v}(\tau)\}_{\mathbf{v}\in\mathcal{V}_N}$ is primitive.
We then see from (\ref{difffrac}) that
\begin{equation*}
f_{\left[\begin{smallmatrix}c/N\\d/N\end{smallmatrix}\right]}(\tau)
-f_{\left[\begin{smallmatrix}0\\1/N\end{smallmatrix}\right]}(\tau)\neq0,
\end{equation*}
which yields $\begin{bmatrix}c/N\\d/N\end{bmatrix}\not
\equiv\pm\begin{bmatrix}0\\1/N\end{bmatrix}\Mod{\mathbb{Z}^2}$.
Now, consider the relation
\begin{equation}\label{Frickeunit}
f_{\left[\begin{smallmatrix}0\\1/N\end{smallmatrix}\right]}(\tau)
f_{\left[\begin{smallmatrix}c/N\\d/N\end{smallmatrix}\right]}(\tau)=
\frac{f_{\left[\begin{smallmatrix}c/N\\d/N\end{smallmatrix}\right]}(\tau)-
f_{\left[\begin{smallmatrix}0\\1/N\end{smallmatrix}\right]}(\tau)}
{f_{\left[\begin{smallmatrix}1/N\\0\end{smallmatrix}\right]}(\tau)
-f_{\left[\begin{smallmatrix}a/N\\b/N\end{smallmatrix}\right]}(\tau)}
\end{equation}
derived from (\ref{difffrac}). Since $g_2(\zeta_3)=0$ (\cite[p. 37]{Lang}), the left side of (\ref{Frickeunit}) vanishes at $\zeta_3$ by the definition (\ref{Fricke}), whereas
the right side of (\ref{Frickeunit}) has neither zeros nor poles on $\mathbb{H}$
by Lemma \ref{porder} (ii). This gives a contradiction.
Thus we achieve that
\begin{equation*}
\begin{bmatrix}a/N\\b/N\end{bmatrix}\equiv\pm
\begin{bmatrix}1/N\\0\end{bmatrix}\Mod{\mathbb{Z}^2},
\end{equation*}
and get by (\ref{difffrac}) that
\begin{equation*}
\begin{bmatrix}c/N\\d/N\end{bmatrix}\equiv\pm
\begin{bmatrix}0\\1/N\end{bmatrix}\Mod{\mathbb{Z}^2}.
\end{equation*}
Furthermore, we obtain by the fact $\det(\gamma)=ad-bc=1$ that
$a\equiv d\equiv\pm 1\Mod{N}$ and $b\equiv c\equiv0\Mod{N}$; and hence
$\gamma\in\pm\Gamma(N)$. This proves (i) by Galois theory.
\item[(ii)] We get by (i) and \cite[Theorem 6.6]{Shimura} that
\begin{equation*}
\mathcal{F}_N=
\mathbb{Q}\left(\zeta_N,\,j(\tau),\,f_{\left[\begin{smallmatrix}1/N\\0\end{smallmatrix}\right]}(\tau)
-f_{\left[\begin{smallmatrix}0\\1/N\end{smallmatrix}\right]}(\tau)^{-1}\right).
\end{equation*}
Thus, if we set
\begin{equation*}
F=\mathbb{Q}\left(j(\tau),\,\zeta_N\left(f_{\left[\begin{smallmatrix}1/N\\0\end{smallmatrix}\right]}(\tau)
-f_{\left[\begin{smallmatrix}0\\1/N\end{smallmatrix}\right]}(\tau)^{-1}\right)\right),
\end{equation*}
then $\mathrm{Gal}(\mathcal{F}_N/F)$ is a subgroup of
\begin{equation*}
G_N=\left\{\begin{bmatrix}1&0\\0&d\end{bmatrix}~|~d\in(\mathbb{Z}/N\mathbb{Z})^\times
\right\}.
\end{equation*}
Let $\gamma=\begin{bmatrix}1&0\\0&d\end{bmatrix}$ be an element of $G_N$
which fixes the field $F$ elementwise. We then see by (F3) and (A1) that
\begin{eqnarray*}
\zeta_N\left(f_{\left[\begin{smallmatrix}1/N\\0\end{smallmatrix}\right]}(\tau)
-f_{\left[\begin{smallmatrix}0\\1/N\end{smallmatrix}\right]}(\tau)^{-1}\right)&=&
\left(\zeta_N\left(f_{\left[\begin{smallmatrix}1/N\\0\end{smallmatrix}\right]}(\tau)
-f_{\left[\begin{smallmatrix}0\\1/N\end{smallmatrix}\right]}(\tau)^{-1}\right)\right)^\gamma\\
&=&\zeta_N^d\left(f_{\left[\begin{smallmatrix}1/N\\0\end{smallmatrix}\right]}(\tau)
-f_{\left[\begin{smallmatrix}0\\d/N\end{smallmatrix}\right]}(\tau)^{-1}\right),
\end{eqnarray*}
from which it follows that
\begin{equation}\label{zetafrac}
\zeta_N^{d-1}=
\frac{f_{\left[\begin{smallmatrix}1/N\\0\end{smallmatrix}\right]}(\tau)-
f_{\left[\begin{smallmatrix}0\\1/N\end{smallmatrix}\right]}(\tau)^{-1}}
{f_{\left[\begin{smallmatrix}1/N\\0\end{smallmatrix}\right]}(\tau)-
f_{\left[\begin{smallmatrix}0\\d/N\end{smallmatrix}\right]}(\tau)^{-1}}.
\end{equation}
Here, we note by (F2) and (F3) that the right side of (\ref{zetafrac}) is fixed by the action of $\begin{bmatrix}1&0\\0&-1
\end{bmatrix}$. Thus we attain by (A1) that
\begin{equation*}
\zeta_N^{d-1}=(\zeta_N^{d-1})^{\left[\begin{smallmatrix}1&0\\0&-1\end{smallmatrix}\right]}
=\zeta_N^{-(d-1)},
\end{equation*}
and so $\zeta_N^{d-1}=\pm1$.
If $\zeta_N^{d-1}=-1$, then we derive by (\ref{zetafrac}) that
\begin{equation}\label{2fff}
2f_{\left[\begin{smallmatrix}1/N\\0\end{smallmatrix}\right]}(\tau)
=f_{\left[\begin{smallmatrix}0\\1/N\end{smallmatrix}\right]}(\tau)^{-1}
+f_{\left[\begin{smallmatrix}0\\d/N\end{smallmatrix}\right]}(\tau)^{-1}.
\end{equation}
Due to (F3), the right side of (\ref{2fff}) is fixed by the action of $\begin{bmatrix}1&1\\0&1\end{bmatrix}$,
but we see that
\begin{equation*}
2f_{\left[\begin{smallmatrix}1/N\\0\end{smallmatrix}\right]}(\tau)^{\left[\begin{smallmatrix}
1&1\\0&1\end{smallmatrix}\right]}
=2f_{\left[\begin{smallmatrix}1/N\\1/N\end{smallmatrix}\right]}(\tau)\neq
2f_{\left[\begin{smallmatrix}1/N\\0\end{smallmatrix}\right]}(\tau)
\end{equation*}
because $\{f_\mathbf{v}(\tau)\}_{\mathbf{v}\in\mathcal{V}_N}$ is primitive.
This yields a contradiction.
Therefore, we must have $\zeta_N^{d-1}=1$; and hence $d\equiv 1\Mod{N}$
and $\gamma=\begin{bmatrix}1&0\\0&1\end{bmatrix}$.
This implies by Galois theory that $F=\mathcal{F}_N$, as desired.
\end{enumerate}
\end{proof}

By using the idea of Example \ref{eg1}, we further establish the following theorem.

\begin{theorem}\label{usingSiegel}
Let $n$ be any nonzero integer.
\begin{itemize}
\item[\textup{(i)}] $\mathbb{C}(X(N))=
\mathbb{C}\left(j(\tau),\,
g_{\left[\begin{smallmatrix}1/N\\0\end{smallmatrix}\right]}(\tau)^{12Nn}
g_{\left[\begin{smallmatrix}0\\1/N\end{smallmatrix}\right]}(\tau)^{24Nn}\right)$.
\item[\textup{(ii)}]
$\mathcal{F}_N=
\mathbb{Q}\left(j(\tau),\,
\zeta_Ng_{\left[\begin{smallmatrix}1/N\\0\end{smallmatrix}\right]}(\tau)^{12Nn}
g_{\left[\begin{smallmatrix}0\\1/N\end{smallmatrix}\right]}(\tau)^{24Nn}\right)$.
\end{itemize}
\end{theorem}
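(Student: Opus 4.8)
The plan is to mimic the proof of Theorem~\ref{usingFricke}, but with the primitivity input (Lemma~\ref{porder}(ii)) replaced by the $q$-order formula of Lemma~\ref{porder}(i), just as in Example~\ref{eg1}. Throughout write $H=g_{[1/N,0]}(\tau)^{12Nn}g_{[0,1/N]}(\tau)^{24Nn}$ for the proposed generator, abbreviating column vectors by rows in the subscripts. For part (i) I would take $\gamma=\begin{bmatrix}a&b\\c&d\end{bmatrix}\in\mathrm{SL}_2(\mathbb{Z})$ fixing $H$ and show $\gamma\in\pm\Gamma(N)$; Galois theory then gives $\mathbb{C}(X(N))=\mathbb{C}(j(\tau),H)$.

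By (F3), the hypothesis $H=H^\gamma$ reads $g_{[1/N,0]}^{12Nn}g_{[0,1/N]}^{24Nn}=g_{[a/N,b/N]}^{12Nn}g_{[c/N,d/N]}^{24Nn}$, where $[a/N,b/N],[c/N,d/N]\in\mathcal{V}_N$ since $\gamma^T$ preserves $\mathcal{V}_N$. Comparing $q$-orders by Lemma~\ref{porder}(i), and using $\mathbf{B}_2(\langle x\rangle)=\mathbf{B}_2(\langle-x\rangle)$, I obtain
\[
\mathbf{B}_2(1/N)+2\mathbf{B}_2(0)=\mathbf{B}_2(\langle a/N\rangle)+2\mathbf{B}_2(\langle c/N\rangle).
\]
Applying $\begin{bmatrix}0&1\\-1&0\end{bmatrix}$ to the same identity, exactly as in Example~\ref{eg1}, and comparing $q$-orders once more yields the companion relation
\[
\mathbf{B}_2(0)+2\mathbf{B}_2(1/N)=\mathbf{B}_2(\langle b/N\rangle)+2\mathbf{B}_2(\langle d/N\rangle).
\]

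The heart of part (i) is then a Diophantine analysis of these two relations. Writing $\mathbf{B}_2(t)=t^2-t+1/6$, clearing denominators, and letting $k,l\in\{0,\dots,N-1\}$ be the residues of $a,c$, the first relation becomes $(k-1)(k-(N-1))=2l(N-l)$; for $1\le k\le N-1$ the left side is $\le0$ and the right side $\ge0$, while $k=0$ would force $2l(N-l)=N-1$ (impossible, as $2l(N-l)$ is $0$ or $\ge2(N-1)$), so $a\equiv\pm1$ and $c\equiv0\Mod{N}$. The determinant relation $ad-bc=1$ with $c\equiv0$ then gives $ad\equiv1$, hence $d\equiv\pm1\Mod{N}$ and in particular $d\not\equiv0$. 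Feeding this into the companion relation — which rearranges to $m(N-m)=2[(N-1)-p(N-p)]$ with $m,p$ the residues of $b,d$ — the right side is $\le0$ once $p\ne0$, forcing $b\equiv0\Mod{N}$. Thus $\gamma\equiv\pm I_2\Mod{N}$. I expect this bookkeeping to be the main obstacle, and note that the asymmetric exponents $12N$ versus $24N$ are essential: they break the $a\leftrightarrow c$ symmetry of the first relation and single out which coordinate is $\pm1/N$ and which is integral, while the ``$b\not\equiv0$'' branch of the second relation is precisely what the determinant is needed to exclude.

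For part (ii) I would combine (i) with \cite[Theorem 6.6]{Shimura} to get $\mathcal{F}_N=\mathbb{Q}(\zeta_N,j(\tau),H)$, set $F=\mathbb{Q}(j(\tau),\zeta_NH)$, and note as in Theorem~\ref{usingFricke}(ii) that $\mathrm{Gal}(\mathcal{F}_N/F)$ is a subgroup of $G_N$. For $\gamma=\begin{bmatrix}1&0\\0&d\end{bmatrix}\in G_N$ fixing $F$, (A1) and (F3) collapse $(\zeta_NH)^\gamma=\zeta_NH$ to
\[
\zeta_N^{d-1}=\left(g_{[0,1/N]}(\tau)/g_{[0,d/N]}(\tau)\right)^{24Nn},
\]
the $g_{[1/N,0]}^{12Nn}$ factor being unaffected by a diagonal $\gamma$. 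Acting by $\begin{bmatrix}1&0\\1&1\end{bmatrix}\in\mathrm{SL}_2(\mathbb{Z})$ fixes the constant left side but carries the right side to $(g_{[1/N,1/N]}/g_{[d/N,d/N]})^{24Nn}$, whose $q$-order must therefore vanish; Lemma~\ref{porder}(i) gives $\mathbf{B}_2(1/N)=\mathbf{B}_2(\langle d/N\rangle)$, so $d\equiv\pm1\Mod{N}$. If $d\equiv-1$, then (F2) makes the right side identically $1$, forcing $\zeta_N^{d-1}=1$ and hence $d\equiv1$, which is a contradiction for $N\ge3$ (and $G_2$ is trivial). Therefore $d\equiv1$, $\gamma$ is trivial, and $F=\mathcal{F}_N$ by Galois theory.
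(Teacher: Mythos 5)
Your proposal is correct and follows essentially the same route as the paper's proof: in (i) you compare $q$-orders via Lemma \ref{porder}(i) for the fixed-function identity and for its twist by $\left[\begin{smallmatrix}0&1\\-1&0\end{smallmatrix}\right]$, with the determinant used in between, exactly as the authors do, your explicit Diophantine bookkeeping merely replacing their appeal to the shape of the graph of $y=\mathbf{B}_2(x)$. In (ii) your use of the shear $\left[\begin{smallmatrix}1&0\\1&1\end{smallmatrix}\right]$ instead of the paper's $\left[\begin{smallmatrix}0&-1\\1&0\end{smallmatrix}\right]$ is a cosmetic variant of the same trick of moving the second coordinate into a position where the $q$-order detects it, and your handling of $d\equiv-1\Mod{N}$ via (F2) matches the paper's argument.
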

\begin{proof}
\begin{enumerate}
\item[(i)] Let
\begin{equation*}
g(\tau)=g_{\left[\begin{smallmatrix}1/N\\0\end{smallmatrix}\right]}(\tau)^{12Nn}
g_{\left[\begin{smallmatrix}0\\1/N\end{smallmatrix}\right]}(\tau)^{24Nn},
\end{equation*}
which belongs to $\mathbb{C}(X(N))$.
And, let $\gamma=\begin{bmatrix}
a&b\\c&d\end{bmatrix}$ be an element of $\mathrm{SL}_2(\mathbb{Z})$
leaving $g(\tau)$ fixed.
We get by Lemma \ref{porder} (i) and (F3) that
\begin{eqnarray*}
\mathrm{ord}_q~g(\tau)
&=&
6Nn\mathbf{B}_2(1/N)+12Nn\mathbf{B}_2(0)\\
&=&\mathrm{ord}_q~g(\tau)^\gamma\\
&=&\mathrm{ord}_q~g_{\left[\begin{smallmatrix}a/N\\b/N\end{smallmatrix}\right]}(\tau)^{12Nn}
g_{\left[\begin{smallmatrix}c/N\\d/N\end{smallmatrix}\right]}(\tau)^{24Nn}\\
&=&6Nn\mathbf{B}_2(\langle a/N\rangle)+12Nn\mathbf{B}_2(\langle c/N\rangle).
\end{eqnarray*}
By considering the shape of the graph $y=\mathbf{B}_2(x)$ on the domain $[0,1)$, we deduce that
\begin{equation*}
\langle c/N\rangle=0\quad\textrm{and}\quad
\langle a/N\rangle=1/N~\textrm{or}~(N-1)/N,
\end{equation*}
and so $c\equiv0\pmod{N}$ and $a\equiv\pm1\pmod{N}$.
Moreover, we achieve by the fact
$\det(\gamma)=ad-bc=1$ that
$a\equiv d\equiv\pm 1\pmod{N}$.
On the other hand, we derive by (F3) and Lemma \ref{porder} (i) that
\begin{eqnarray*}
\mathrm{ord}_q~g(\tau)^{\left[\begin{smallmatrix}0&-1\\1&0\end{smallmatrix}\right]}&=&
\mathrm{ord}_q~
g_{\left[\begin{smallmatrix}0\\-1/N\end{smallmatrix}\right]}(\tau)^{12Nn}
g_{\left[\begin{smallmatrix}1/N\\0\end{smallmatrix}\right]}(\tau)^{24Nn}
\\
&=&
6Nn\mathbf{B}_2(0)+12Nn\mathbf{B}_2(1/N)\\
&=&\mathrm{ord}_q~(g(\tau)^{\gamma})^{
\left[\begin{smallmatrix}0&-1\\1&0\end{smallmatrix}\right]}\\
&=&\mathrm{ord}_q~g(\tau)^{\left[\begin{smallmatrix}b&-a\\d&-c\end{smallmatrix}\right]}\\
&=&\mathrm{ord}_q~
g_{\left[\begin{smallmatrix}b/N\\-a/N\end{smallmatrix}\right]}(\tau)^{12Nn}
g_{\left[\begin{smallmatrix}d/N\\-c/N\end{smallmatrix}\right]}(\tau)^{24Nn}\\
&=&6Nn\mathbf{B}_2(\langle b/N\rangle)+12Nn\mathbf{B}_2(\langle d/N\rangle),
\end{eqnarray*}
from which we conclude $b\equiv0\pmod{N}$. Hence $\gamma$ belongs to $\pm
\Gamma(N)$, which proves that $g(\tau)$ generates the field $\mathbb{C}(X(N))$ over $\mathbb{C}(X(1))$.
\item[(ii)] Let
\begin{equation*}
F=\mathbb{Q}\left(j(\tau),\,\zeta_Ng_{\left[\begin{smallmatrix}1/N\\0\end{smallmatrix}\right]}(\tau)^{12Nn}
g_{\left[\begin{smallmatrix}0\\1/N\end{smallmatrix}\right]}(\tau)^{24Nn}\right).
\end{equation*}
Then, $F$ is a subfield of $\mathcal{F}_N$ and $\mathrm{Gal}(\mathcal{F}_N/F)$ is a subgroup of $G_N$. Let $\alpha=\begin{bmatrix}1&0\\0&d\end{bmatrix}$ be an element
of $G_N$ which leaves $\zeta_N g(\tau)$ fixed. Letting $\beta=\begin{bmatrix}0&-1\\1&0\end{bmatrix}
\in\mathrm{SL}_2(\mathbb{Z}/N\mathbb{Z})/\{\pm I_2\}$, we deduce by Lemma \ref{porder} (i), (F3) and (A1) that
\begin{eqnarray*}
\mathrm{ord}_q~(\zeta_Ng(\tau))^\beta&=&
\mathrm{ord}_q~\zeta_Ng_{\left[\begin{smallmatrix}0\\-1/N\end{smallmatrix}\right]}(\tau)^{12Nn}
g_{\left[\begin{smallmatrix}1/N\\0\end{smallmatrix}\right]}(\tau)^{24Nn}\\
&=&
6Nn\mathbf{B}_2(0)+12Nn\mathbf{B}_2(1/N)\\
&=&\mathrm{ord}_q~((\zeta_Ng(\tau))^\alpha)^\beta\\
&=&\mathrm{ord}_q~(\zeta_N^d
g_{\left[\begin{smallmatrix}1/N\\0\end{smallmatrix}\right]}(\tau)^{12Nn}
g_{\left[\begin{smallmatrix}0\\d/N\end{smallmatrix}\right]}(\tau)^{24Nn})^\beta\\
&=&\mathrm{ord}_q~\zeta_N^dg_{\left[\begin{smallmatrix}0\\-1/N\end{smallmatrix}\right]}(\tau)^{12Nn}
g_{\left[\begin{smallmatrix}d/N\\0\end{smallmatrix}\right]}(\tau)^{24Nn}\\
&=&6Nn\mathbf{B}_2(0)+12Nn\mathbf{B}_2(\langle d/N\rangle).
\end{eqnarray*}
Thus we obtain $d\equiv\pm1\Mod{N}$. It is clear that if $N=2$, then $d\equiv1\Mod{N}$.
If $N\geq3$ and $d\equiv-1\Mod{N}$, then we get by (A1), (F2) and (F3) that
\begin{equation*}
\zeta_Ng(\tau)=\zeta_Ng_{\left[\begin{smallmatrix}1/N\\0\end{smallmatrix}\right]}(\tau)^{12Nn}
g_{\left[\begin{smallmatrix}0\\1/N\end{smallmatrix}\right]}(\tau)^{24Nn}
=(\zeta_Ng(\tau))^\alpha
=\zeta_N^{-1}g_{\left[\begin{smallmatrix}1/N\\0\end{smallmatrix}\right]}(\tau)^{12Nn}
g_{\left[\begin{smallmatrix}0\\1/N\end{smallmatrix}\right]}(\tau)^{24Nn},
\end{equation*}
and so $\zeta_N^2=1$. But, this is impossible.
Therefore, we always have $d\equiv1\Mod{N}$, from which $F=\mathcal{F}_N$ by Galois theory.
\end{enumerate}
\end{proof}

\begin{remark}\label{singularmodel}
It is well known that $g_\mathbf{v}(\tau)^{12N}$ are integral over $\mathbb{Z}[j(\tau)]$
for all $\mathbf{v}\in\mathcal{V}_N$ (\cite[$\S$3]{K-S}).
Thus, if $n>0$ and $g(\tau)=g_{\left[\begin{smallmatrix}1/N\\0\end{smallmatrix}\right]}(\tau)^{12Nn}
g_{\left[\begin{smallmatrix}0\\1/N\end{smallmatrix}\right]}(\tau)^{24Nn}$, then
there is a polynomial $f_N(x,\,y)\in\mathbb{Z}[x,\,y]$ for which
$f_N(x,\,y)$ is monic in $x$ and $f_N(g(\tau),\,j(\tau))=0$. That is, the equation $f_N(x,\,y)=0$ gives
rise to an affine singular model of the modular curve $X(N)$ over $\mathbb{Q}$.
For example, if $N=2$ and $n=1$, then one can estimate
\begin{eqnarray*}
f_N(x,\,y)&=&x^6+(-2y^3+2^8\cdot3^2y^2+2^{18}\cdot3y-2^{25}\cdot3)x^5\\
&&+(y^6-2^9\cdot3^2y^5+2^{16}\cdot3^2\cdot13y^4
-2^{25}\cdot163y^3+2^{36}\cdot3^3y^2
-2^{44}\cdot3y
+2^{48}\cdot3\cdot5)x^4\\
&&+(-2^{25}y^6+2^{40}\cdot3\cdot67y^4-2^{55}\cdot7y^3
+2^{57}\cdot3^2\cdot47y^2+2^{67}\cdot3^2y
-2^{74}\cdot5)x^3\\
&&+(2^{48}y^6-2^{57}\cdot3^2y^5
+2^{64}\cdot3^2\cdot13y^4
-2^{73}\cdot163y^3+2^{84}\cdot3^3y^2
-2^{92}\cdot3y+2^{96}\cdot3\cdot5)x^2\\
&&+(-2^{97}y^3+2^{104}\cdot3^2y^2
+2^{114}\cdot3y-2^{121}\cdot3)x+2^{144}
\end{eqnarray*}
by using the Fourier expansions of
Siegel functions and $j(\tau)$ (see \cite[p. 29]{K-L} and \cite[Theorem 12.17]{Cox}).
\end{remark}

\section {Application to class fields}

Let $K$ be an imaginary quadratic field and $N\geq2$.
As a consequence of the main theorem of complex multiplication, we obtain that
$H_K=K(j(\tau_K))$ and
\begin{equation}\label{CM}
K_{(N)}=K(h(\tau_K)~|~h(\tau)\in\mathcal{F}_N~\textrm{is finite at}~\tau_K)
\end{equation}
(\cite[Theorem 1 and Corollary to Theorem 2 in Chapter 10]{Lang}).
Let
\begin{equation*}
\min(\tau_K,\,\mathbb{Q})=x^2+B_Kx+C_K\in\mathbb{Z}[x],
\end{equation*}
and define a subgroup $W_{K,\,N}$ of $\mathrm{GL}_2(\mathbb{Z}/N\mathbb{Z})$ by
\begin{eqnarray*}
W_{K,\,N}=\left\{\gamma=\begin{bmatrix}t-B_Ks & -C_Ks\\s&t\end{bmatrix}~|~
t,\,s\in\mathbb{Z}/N\mathbb{Z}~\textrm{such that}~\gamma\in\mathrm{GL}_2(\mathbb{Z}/N\mathbb{Z})\right\}.
\end{eqnarray*}
If $K$ is different from $\mathbb{Q}(\sqrt{-1})$ and
$\mathbb{Q}(\sqrt{-3})$, then by the Shimura reciprocity law we have the isomorphism
\begin{eqnarray}
W_{K,\,N}/\{\pm I_2\}&\stackrel{\sim}{\longrightarrow}&\mathrm{Gal}(K_{(N)}/H_K)\label{reciprocity}\\
\gamma&\mapsto&(h(\tau_K)\mapsto h^\gamma(\tau_K)~|~
h(\tau)\in\mathcal{F}_N~\textrm{is finite at}~\tau_K)\nonumber
\end{eqnarray}
(\cite[$\S$3]{Stevenhagen}).

\begin{lemma}\label{rootofunity}
If $m$ is a positive integer such that $\zeta_m\in K_{(N)}$, then
$m$ divides $12N$.
\end{lemma}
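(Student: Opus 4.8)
The plan is to translate the problem into a ramification statement about the abelian extension $K(\zeta_m)/K$ and to analyze it one rational prime at a time. Since $\zeta_m\in K_{(N)}$ and $K\subseteq K_{(N)}$, we have $K(\zeta_m)\subseteq K_{(N)}$, so $K(\zeta_m)/K$ is a subextension of the ray class field of conductor dividing $N\mathcal{O}_K$. Consequently $K(\zeta_m)/K$ is abelian and unramified outside $N$, with conductor dividing $N\mathcal{O}_K$; as $K$ is imaginary quadratic its infinite places are complex, so only the finite part of the conductor intervenes. It therefore suffices to bound $v_\ell(m)$ in terms of $v_\ell(N)$ for each rational prime $\ell$.

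First I would fix $\ell$, set $k=v_\ell(m)$, and pass to a prime $\mathfrak{p}\mid\ell$ of $K$, so that the local extension $K_\mathfrak{p}(\zeta_{\ell^k})/K_\mathfrak{p}$ has conductor exponent at most $e_\mathfrak{p}\,v_\ell(N)$, where $e_\mathfrak{p}\in\{1,2\}$ is the ramification index of $\mathfrak{p}$ over $\ell$. The clean mechanism I would exploit is that $\mathbb{Q}_\ell(\zeta_{\ell^k})/\mathbb{Q}_\ell$ is totally ramified of degree $\varphi(\ell^k)$, so the portion of this tower that becomes \emph{unramified} after base change to $K_\mathfrak{p}$ is exactly $K_\mathfrak{p}\cap\mathbb{Q}_\ell(\zeta_{\ell^k})$, a subfield of $\mathbb{Q}_\ell(\zeta_{\ell^k})$ of degree at most $[K_\mathfrak{p}:\mathbb{Q}_\ell]\le 2$ over $\mathbb{Q}_\ell$. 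Writing this subfield as $\mathbb{Q}_\ell(\zeta_{\ell^{k_0}})$, the inequality $\varphi(\ell^{k_0})\le 2$ forces $\ell^{k_0}\mid 4$ when $\ell=2$, $\ell^{k_0}\mid 3$ when $\ell=3$, and $k_0=0$ when $\ell\ge5$.

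The remaining layer $K_\mathfrak{p}(\zeta_{\ell^k})/K_\mathfrak{p}(\zeta_{\ell^{k_0}})$ is genuinely ramified over $K_\mathfrak{p}$, so comparing its conductor exponent with the bound $e_\mathfrak{p}\,v_\ell(N)$ yields $v_\ell(m)\le k_0+v_\ell(N)$. Summing over all primes, the free exponents $k_0$ contribute at most one factor of $3$ and at most two factors of $2$, i.e. a divisor of $12$, while the ramified parts are absorbed by $N$; this is exactly the divisibility $m\mid 12N$. I would also note that this bound is sharp, being attained for $K=\mathbb{Q}(\sqrt{-1})$ (giving $\zeta_4$) and $K=\mathbb{Q}(\sqrt{-3})$ (giving $\zeta_3$).

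The step I expect to be the main obstacle is the conductor-exponent comparison for the wildly ramified layers at $\ell=2$ and $\ell=3$ when $\mathfrak{p}$ is ramified over $\ell$ (the cases $2\mid d_K$ or $3\mid d_K$, including $K=\mathbb{Q}(\sqrt{-1})$ and $K=\mathbb{Q}(\sqrt{-3})$): there $K_\mathfrak{p}$ is itself a ramified quadratic field lying inside the local cyclotomic tower, and one must track the higher ramification filtration of $K_\mathfrak{p}(\zeta_{2^k})/K_\mathfrak{p}$ and $K_\mathfrak{p}(\zeta_{3^k})/K_\mathfrak{p}$ carefully enough to obtain the precise inequality $v_\ell(m)\le k_0+v_\ell(N)$ with no additional slack. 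For $\ell\ge5$, and for primes where $\mathfrak{p}$ is unramified, the same comparison is routine since the cyclotomic conductor exponent is preserved under unramified base change.
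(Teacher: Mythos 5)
The paper itself offers no proof of this lemma to compare against: its ``proof'' is the citation to Kubert--Lang \cite[Lemma 4.3 (i) in Chapter 9]{K-L}, so your argument must stand entirely on its own. Your global-to-local reduction is sound: $K(\zeta_m)\subseteq K_{(N)}$ does force the conductor of the abelian extension $K(\zeta_m)/K$ to divide $N\mathcal{O}_K$, hence the conductor exponent of $K_\mathfrak{p}(\zeta_{\ell^k})/K_\mathfrak{p}$ to be at most $e_\mathfrak{p}v_\ell(N)$, and your final bookkeeping (at most $2^2$ and $3^1$ of ``free'' roots of unity, the rest absorbed into $N$) would indeed yield $m\mid 12N$, with the exceptional fields $\mathbb{Q}(\sqrt{-1})$, $\mathbb{Q}(\sqrt{-3})$ accounted for automatically. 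So the strategy is workable; the problem is that the two statements it rests on are, respectively, false as stated and not proved.

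First, the structural claim that the maximal subextension of $\mathbb{Q}_\ell(\zeta_{\ell^k})$ becoming unramified over $K_\mathfrak{p}$ equals $K_\mathfrak{p}\cap\mathbb{Q}_\ell(\zeta_{\ell^k})$ is wrong. Take $\ell$ odd with $\ell\mid d_K$ and $K_\mathfrak{p}=\mathbb{Q}_\ell(\sqrt{u\ell^*})$, where $\ell^*=(-1)^{(\ell-1)/2}\ell$ and $u$ is a nonsquare unit (e.g. $K=\mathbb{Q}(\sqrt{-10})$, $\ell=5$): then $L=\mathbb{Q}_\ell(\sqrt{\ell^*})\subseteq\mathbb{Q}_\ell(\zeta_\ell)$ satisfies $LK_\mathfrak{p}=K_\mathfrak{p}(\sqrt{u})$, which \emph{is} unramified over $K_\mathfrak{p}$, while $L\not\subseteq K_\mathfrak{p}$, so the intersection is just $\mathbb{Q}_\ell$. (Only the degree bound $[L:\mathbb{Q}_\ell]\le e_\mathfrak{p}\le 2$ survives; also a subfield of $\mathbb{Q}_2(\zeta_{2^k})$ need not be of the form $\mathbb{Q}_2(\zeta_{2^{k_0}})$ --- e.g. $\mathbb{Q}_2(\sqrt{2})$ --- so ``writing this subfield as $\mathbb{Q}_\ell(\zeta_{\ell^{k_0}})$'' is not legitimate.) In particular your remark that the case $\ell\ge5$ is ``routine'' is exactly where this failure occurs. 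Second, and decisively: the inequality $v_\ell(m)\le k_0+v_\ell(N)$ \emph{is} the content of the lemma, and it is never derived --- you assert that comparing the conductor exponent of the ``genuinely ramified layer'' with $e_\mathfrak{p}v_\ell(N)$ ``yields'' it, and then concede that the wild cases require tracking the higher ramification filtration ``carefully enough,'' which is precisely the work that is missing. What is actually needed is an explicit lower bound on the conductor exponent $c$ of $K_\mathfrak{p}(\zeta_{\ell^k})/K_\mathfrak{p}$ in the ramified cases (for instance $c=2k-2$ when $K_\mathfrak{p}=\mathbb{Q}_2(\zeta_4)$ and $k\ge3$, and $c=2k-1$ in the ramified odd cases), obtainable either from Herbrand's theorem ($H_u=G_u\cap H$ plus Hasse--Arf) or from the norm-group identities $N_{K_\mathfrak{p}/\mathbb{Q}_\ell}\bigl(U_{K_\mathfrak{p}}^{(2i)}\bigr)=U_{\mathbb{Q}_\ell}^{(i)}$ for ramified quadratic local fields. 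As it stands, the proposal is a correct plan with the central computation left blank, so it does not constitute a proof.
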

\begin{proof}
See \cite[Lemma 4.3 (i) in Chapter 9]{K-L}.
\end{proof}

Let $\{h_\mathbf{v}(\tau)\}_{\mathbf{v}\in\mathcal{V}_N}$ be
a totally primitive Fricke family of level $N$, and let
$d_N(\tau)$ be the discriminant of $h_{\left[\begin{smallmatrix}0\\1/N\end{smallmatrix}\right]}(\tau)^{12N}$ over $\mathcal{F}_1$.
Define an equivalence relation $\sim$ on the set $\mathcal{V}_N$ by
\begin{equation*}
\mathbf{u}~\sim~\mathbf{v}\quad\Longleftrightarrow\quad
\mathbf{u}\equiv\pm\mathbf{v}\Mod{\mathbb{Z}^2}.
\end{equation*}
Since $\mathrm{Gal}(\mathcal{F}_N/\mathcal{F}_1)\simeq\mathrm{GL}_2(\mathbb{Z}/N\mathbb{Z})/\{\pm I_2\}$ and $\{h_\mathbf{v}(\tau)^{12N}\}_{\mathbf{v}\in\mathcal{V}_N}$ is primitive, we get by (F3) that
\begin{equation*}
d_N(\tau)=\pm\prod_{\begin{smallmatrix}[\mathbf{u}],\,[\mathbf{v}]\in\mathcal{V}_N/\sim\\
\textrm{such that}~[\mathbf{u}]\neq[\mathbf{v}]\end{smallmatrix}}\left(h_\mathbf{u}(\tau)^{12N}-h_\mathbf{v}(\tau)^{12N}\right),
\end{equation*}
where $[\mathbf{u}]$ and $[\mathbf{v}]$ stand for the equivalences classes of $\mathbf{u}$ and $\mathbf{v}$ in $\mathcal{V}_N$, respectively.
Note that $d_N(\tau)$ is a nonzero element of $\mathcal{F}_1$ which is weakly holomorphic.
Thus it has only finitely many zeros on the modular curve $X(1)$, and hence
the set
\begin{equation*}
S_N=\{\textrm{imaginary quadratic fields}~K~|~d_N(\tau_K)=0\}\cup\{\mathbb{Q}(\sqrt{-1}),\mathbb{Q}(\sqrt{-3})\}
\end{equation*}
is finite.

\begin{theorem}\label{invariant}
Let $K$ be an imaginary quadratic field lying outside the set $S_N$, and let
$\{h_\mathbf{v}(\tau)\}_{\mathbf{v}\in\mathcal{V}_N}$ be a totally primitive Fricke family of level $N$. If $h_{\left[\begin{smallmatrix}0\\1/N\end{smallmatrix}\right]}(\tau_K)$ is nonzero, then
\begin{equation*}
h_{\left[\begin{smallmatrix}0\\1/N\end{smallmatrix}\right]}(\tau_K)^n\quad
\end{equation*}
generates $K_{(N)}$ over $H_K$ for any nonzero integer $n$.
\end{theorem}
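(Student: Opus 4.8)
The plan is to combine the Shimura reciprocity isomorphism (\ref{reciprocity}) with Galois theory. Since $K$ lies outside $S_N$, in particular $K$ is neither $\mathbb{Q}(\sqrt{-1})$ nor $\mathbb{Q}(\sqrt{-3})$, so (\ref{reciprocity}) identifies $\mathrm{Gal}(K_{(N)}/H_K)$ with $W_{K,\,N}/\{\pm I_2\}$. It thus suffices to prove that every $\gamma\in W_{K,\,N}$ fixing the value $h_{\left[\begin{smallmatrix}0\\1/N\end{smallmatrix}\right]}(\tau_K)^n$ satisfies $\gamma\equiv\pm I_2\Mod{N}$. First I would record that $h_{\left[\begin{smallmatrix}0\\1/N\end{smallmatrix}\right]}(\tau)$ is holomorphic on $\mathbb{H}$ by (F1), hence finite at $\tau_K$; combined with $h_{\left[\begin{smallmatrix}0\\1/N\end{smallmatrix}\right]}(\tau_K)\neq0$, this shows that for every nonzero $n$ the function $h_{\left[\begin{smallmatrix}0\\1/N\end{smallmatrix}\right]}(\tau)^n$ lies in $\mathcal{F}_N$ and is finite at $\tau_K$, so its value belongs to $K_{(N)}$ by (\ref{CM}).

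Next I would make the Galois action explicit. Writing $\gamma=\left[\begin{smallmatrix}t-B_Ks&-C_Ks\\s&t\end{smallmatrix}\right]$, one computes $\gamma^T\left[\begin{smallmatrix}0\\1/N\end{smallmatrix}\right]=\left[\begin{smallmatrix}s/N\\t/N\end{smallmatrix}\right]$, and since $\gamma^T\in\mathrm{GL}_2(\mathbb{Z}/N\mathbb{Z})$ preserves $\mathcal{V}_N$ this vector again lies in $\mathcal{V}_N$. Hence (F3) gives $\left(h_{\left[\begin{smallmatrix}0\\1/N\end{smallmatrix}\right]}^n\right)^\gamma=h_{\left[\begin{smallmatrix}s/N\\t/N\end{smallmatrix}\right]}^n$, and the assumption that $\gamma$ fixes the value turns into the numerical identity $h_{\left[\begin{smallmatrix}s/N\\t/N\end{smallmatrix}\right]}(\tau_K)^n=h_{\left[\begin{smallmatrix}0\\1/N\end{smallmatrix}\right]}(\tau_K)^n$.

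The crux is to upgrade this $n$th power relation to a $12N$th power relation that the discriminant hypothesis can handle. Setting $\zeta=h_{\left[\begin{smallmatrix}s/N\\t/N\end{smallmatrix}\right]}(\tau_K)/h_{\left[\begin{smallmatrix}0\\1/N\end{smallmatrix}\right]}(\tau_K)$, which is legitimate because the denominator is nonzero, I get $\zeta^n=1$ and $\zeta\in K_{(N)}$; so $\zeta$ is a root of unity in $K_{(N)}$ whose order divides $12N$ by Lemma \ref{rootofunity}, whence $\zeta^{12N}=1$ and $h_{\left[\begin{smallmatrix}s/N\\t/N\end{smallmatrix}\right]}(\tau_K)^{12N}=h_{\left[\begin{smallmatrix}0\\1/N\end{smallmatrix}\right]}(\tau_K)^{12N}$. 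Since $K\notin S_N$ we have $d_N(\tau_K)\neq0$, and by the product formula for $d_N(\tau)$ displayed above, which itself rests on the primitivity of $\{h_\mathbf{v}(\tau)^{12N}\}_{\mathbf{v}\in\mathcal{V}_N}$ (a consequence of total primitivity), this forces $\left[\begin{smallmatrix}s/N\\t/N\end{smallmatrix}\right]$ and $\left[\begin{smallmatrix}0\\1/N\end{smallmatrix}\right]$ to represent the same class in $\mathcal{V}_N/\sim$. Thus $s\equiv0$ and $t\equiv\pm1\Mod{N}$, so $\gamma\equiv\left[\begin{smallmatrix}t&0\\0&t\end{smallmatrix}\right]\equiv\pm I_2\Mod{N}$, which completes the argument by Galois theory.

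I expect the main obstacle to be precisely this last bridge: the hypothesis only supplies an equality of special values, not of functions, so the primitivity of the family cannot be applied directly at the level of functions. The two devices that overcome this are Lemma \ref{rootofunity}, which replaces the arbitrary exponent $n$ by the single exponent $12N$ by bounding the roots of unity available in $K_{(N)}$, and the nonvanishing $d_N(\tau_K)\neq0$, which is exactly the statement that the $12N$th power values separate the distinct classes of $\mathcal{V}_N/\sim$ at $\tau_K$. With these in hand the reciprocity law and Galois theory finish the proof.
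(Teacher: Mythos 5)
Your proposal is correct and follows essentially the same route as the paper's own proof: Shimura reciprocity plus (F3) to turn the fixed-value hypothesis into $h_{\left[\begin{smallmatrix}s/N\\t/N\end{smallmatrix}\right]}(\tau_K)^n=h_{\left[\begin{smallmatrix}0\\1/N\end{smallmatrix}\right]}(\tau_K)^n$, Lemma \ref{rootofunity} to upgrade this to an equality of $12N$th powers, and the nonvanishing of $d_N(\tau_K)$ to force $\gamma\equiv\pm I_2\Mod{N}$. The only difference is cosmetic (you argue directly that any fixing element is trivial, while the paper argues by contradiction), and your extra checks on finiteness at $\tau_K$ and on $\gamma^T$ preserving $\mathcal{V}_N$ are sound.
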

\begin{proof}
Suppose on the contrary that
$h_{\left[\begin{smallmatrix}0\\1/N\end{smallmatrix}\right]}(\tau_K)^n$ does not generates $K_{(N)}$ over $H_K$ for some nonzero integer $n$. Then, there is a nonidentity element
$\alpha$ of $\mathrm{Gal}(K_{(N)}/H_K)$ leaving $h_{\left[\begin{smallmatrix}0\\1/N\end{smallmatrix}\right]}(\tau_K)^n$ fixed.
Due to the isomorphism given in (\ref{reciprocity}), the Galois element $\alpha$ corresponds to
a matrix $\begin{bmatrix}t-B_Ks&-C_Ks\\s&t\end{bmatrix}$ in $W_{K,\,N}/\{\pm I_2\}$
with $\begin{bmatrix}s\\t\end{bmatrix}\neq\pm
\begin{bmatrix}0\\1\end{bmatrix}\Mod{N}$. We then achieve that
\begin{eqnarray*}
h_{\left[\begin{smallmatrix}0\\1/N\end{smallmatrix}\right]}(\tau_K)^n&=&
(h_{\left[\begin{smallmatrix}0\\1/N\end{smallmatrix}\right]}(\tau_K)^n)^\alpha\\
&=&(h_{\left[\begin{smallmatrix}0\\1/N\end{smallmatrix}\right]}(\tau_K)^\alpha)^n\\
&=&h_{\left[\begin{smallmatrix}t-B_Ks&s\\-C_Ks&t\end{smallmatrix}\right]
\left[\begin{smallmatrix}0\\1/N\end{smallmatrix}\right]}(\tau_K)^n
\quad\textrm{by the isomorphism in (\ref{reciprocity}) and (F3)}\\
&=&h_{\left[\begin{smallmatrix}s/N\\t/N\end{smallmatrix}\right]}(\tau_K)^n,
\end{eqnarray*}
from which we get
\begin{equation}\label{hzetah}
h_{\left[\begin{smallmatrix}0\\1/N\end{smallmatrix}\right]}(\tau_K)=
\zeta h_{\left[\begin{smallmatrix}s/N\\t/N\end{smallmatrix}\right]}(\tau_K)
\quad\textrm{for some $|n|$th root of unity}.
\end{equation}
Since $h_{\left[\begin{smallmatrix}0\\1/N\end{smallmatrix}\right]}(\tau_K)$
and $h_{\left[\begin{smallmatrix}s/N\\t/N\end{smallmatrix}\right]}(\tau_K)$ belong to $K_{(N)}$ by
(\ref{CM}), we deduce by Lemma \ref{rootofunity} that $\zeta$ is a $12N$th root of unity.
Thus we obtain by (\ref{hzetah}) that
\begin{equation*}
h_{\left[\begin{smallmatrix}0\\1/N\end{smallmatrix}\right]}(\tau_K)^{12N}=
h_{\left[\begin{smallmatrix}s/N\\t/N\end{smallmatrix}\right]}(\tau_K)^{12N},
\end{equation*}
which implies $d_N(\tau_K)=0$. But, this contradicts that $K$ does not belong to $S_N$.
\par
Therefore, we conclude that if $h_{\left[\begin{smallmatrix}0\\1/N\end{smallmatrix}\right]}(\tau_K)$ is nonzero, then $h_{\left[\begin{smallmatrix}0\\1/N\end{smallmatrix}\right]}(\tau_K)^n$
generates $K_{(N)}$ over $H_K$ for any nonzero integer $n$.
\end{proof}

\begin{remark}\label{invariantremark}
Let $K$ be an imaginary quadratic field of discriminant $d_K$, and let $n$ be a nonzero integer.
\begin{itemize}
\item[(i)] Every weakly holomorphic function in $\mathcal{F}_1$ is a polynomial in $j(\tau)$ over $\mathbb{Q}$ (\cite[Theorem 2 in Chapter 5]{Lang}).
Moreover, since $\mathrm{ord}_q~j(\tau)=-1$ (\cite[p. 45]{Lang}), we
see that $d_N(\tau)$ is a polynomial in $j(\tau)$ over $\mathbb{Q}$ of degree
$|\mathrm{ord}_q~d_N(\tau)|$. It is well known that $j(\tau_K)$ generates $H_K$ over $K$
(as we mentioned), and $[H_K:K]\rightarrow\infty$ as $|d_K|\rightarrow\infty$
(\cite[p. 149]{Cox}). Hence, if $|d_K|$ ($\geq7$) is large enough so as to have
$[H_K:K]>|\mathrm{ord}_q~d_N(\tau)|$, then $K$ does not belong to the set $S_N$.
\item[(ii)] Let $g(\tau)=g_{\left[\begin{smallmatrix}1/N\\0\end{smallmatrix}\right]}(\tau)^{12Nn}
g_{\left[\begin{smallmatrix}0\\1/N\end{smallmatrix}\right]}(\tau)^{24Nn}$ be the function
stated in Theorem \ref{usingSiegel}. By making use of the Kronecker second limit formula one can also
show that if $\gcd(N,\,3\cdot5\cdot 7\cdot 13\cdot d_K(d_K-1))=1$, then
$g(\tau_K)$ generates $K_{(N)}$ over the ground field $K$
instead of $H_K$ (see \cite{K-Y}).
\end{itemize}
\end{remark}

\bibliographystyle{amsplain}

\address{
Applied Algebra and Optimization Research Center\\
Sungkyunkwan University\\
Suwon-si, Gyeonggi-do 16419\\
Republic of Korea} {hoyunjung@skku.edu}
\address{
Department of Mathematical Sciences \\
KAIST \\
Daejeon 34141\\
Republic of Korea} {jkkoo@math.kaist.ac.kr}
\address{
Department of Mathematics\\
Hankuk University of Foreign Studies\\
Yongin-si, Gyeonggi-do 17035\\
Republic of Korea} {dhshin@hufs.ac.kr}

\end{document}